\documentclass[a4paper, 11pt]{article}
\usepackage [a4paper,left=2.5cm,bottom=2.5cm,right=2.5cm,top=2.5cm]{geometry}

\usepackage[english]{babel}

\usepackage[utf8]{inputenc} 	
\usepackage[T1]{fontenc}
\usepackage{lmodern}
\usepackage{yhmath}
\usepackage{comment}
\usepackage{bm}
\usepackage{enumerate}   

\usepackage{amsthm,amsmath,amssymb,amsbsy,bbm,mathrsfs,supertabular,
eurosym,graphicx,enumitem,xcolor, dsfont}
\usepackage{mathtools}
\usepackage{ulem}

\usepackage{tikz,tkz-tab} 
\usepackage{subcaption}

\newtheoremstyle{BBstyle0}  {}{}{\itshape}{}{\bfseries}{}{6pt}{}
\newtheoremstyle{BBstyle1}  {3pt}{3pt}{\rmfamily}{}{\itshape}{: }{3pt}{}
\newtheoremstyle{BBstyle2}  {3pt}{3pt}{\itshape}{}{\bfseries\large}{}{0pt}{}
\newtheoremstyle{BBstyle3}  {}{}{\itshape}{}{\bfseries}{: }{3pt}{}
\newtheoremstyle{BBstyle4}  {}{}{\rmfamily}{}{\bfseries}{}{6pt}{}
  
\newtheorem{thm}{Theorem}
\newtheorem{lemma}{Lemma}
\newtheorem{prop}{Proposition}
\newtheorem{df}{Definition}

\newtheorem{ass}{Assumption}

\newtheorem{rem}{Remark}

\theoremstyle{definition}

\usepackage[english]{babel}

\usepackage{enumitem}
\usepackage{hyperref}  

\usepackage[textwidth=1.8cm, textsize=tiny]{todonotes}

\newcommand{\norm}[1]{\left\|{#1}\right\|}

\newcommand{\R}{{\mathbb{R}}}

\newcommand{\modar}{\color{black}}

\newcommand{\indi}[1]{\mathds{1}_{#1}}
\DeclarePairedDelimiter{\abs}{\lvert}{\rvert}

\title{On the role of symmetry for staircase mechanisms in local differential privacy efficiency across different privacy regimes}
\author{Chiara Amorino\thanks{ Universitat Pompeu Fabra and Barcelona School of Economics, Department of Economics and Business, Ram\'on Trias Fargas 25-27, 08005, Barcelona, Spain. The author gratefully acknowledges financial support of PID2022-138268NB-I00/AEI/10.13039/501100011033.} \qquad Arnaud Gloter \thanks{Laboratoire de Math\'ematiques et Mod\'elisation d'Evry, CNRS, Univ Evry, Universit\'e Paris-Saclay, 91037, Evry, France.}} 

\begin{document}
\maketitle




\begin{abstract}
\noindent 
We investigate the structural foundations of statistical efficiency under $\alpha$-local differential privacy, with a focus on maximizing Fisher information. Building on the role of continuous staircase mechanisms, we identify a fundamental symmetry regarding the extremal values $1$ and $e^{\alpha}$. We demonstrate that when the optimal measure satisfies this symmetry, the Fisher information admits a closed-form expression. More generally, we derive a decomposition of the Fisher information into symmetric and asymmetric components, scaling as $\alpha^{2}$ and $\alpha^{3}$, respectively,  for $\alpha \to 0$. This reveals that, if in the high-privacy regime asymmetry is negligible, it is no longer the case as privacy constraints are relaxed.

Motivated by this, we introduce a class of fully asymmetric privacy mechanisms constructed via pushforward mappings, proving that—unlike their symmetric counterparts—they recover the full Fisher information of the non-private model as $\alpha \to \infty$. We bridge the gap between theory and practice by providing a tractable implementation of these mechanisms, governed by a tuning parameter $c$. This parameter allows for a smooth interpolation between the symmetric regime and the fully asymmetric regime. Furthermore, we demonstrate the versatility of this framework by showing that it encompasses the binomial mechanism as a limiting case.
\\
\\
\noindent
 \textit{Keywords: local differential privacy, Fisher information, efficiency, staircase mechanism} \\ 
 
\noindent
\textit{MSC2020 subject classifications: Primary 62F12, 62F30; secondary 62B15}
\end{abstract}

\tableofcontents

\section{Introduction}{\label{s: intro}

We have entered the era of big data, in which statisticians and data scientists have access to increasingly large and rich datasets. The growing availability of data offers unprecedented opportunities for statistical analysis and inference. However, a substantial portion of these data are sensitive in nature, making it essential to ensure appropriate levels of privacy protection.

Several approaches to data privacy have been proposed in the literature. Among them, differential privacy has emerged as a fundamental and widely adopted framework. The notion of global differential privacy, also referred to as central differential privacy, was first introduced in the seminal work of Dwork et al.~\cite{Dwo06}, where a trusted curator has access to the entire dataset and releases privatized outputs. Local differential privacy, by contrast, does not rely on the existence of a trusted aggregator: each user privatizes their data locally before transmission. This stronger privacy model has been adopted in practice by major technology companies such as Apple~\cite{5LDP,49LDP} and Google~\cite{1LDP,23LDP}, highlighting the relevance of local differential privacy both from a theoretical perspective and for real-world applications.

While local differential privacy provides stronger privacy guarantees, it also entails a significant loss in statistical utility. This fundamental tension has motivated a growing body of work devoted to statistical inference under local differential privacy constraints. Early and influential contributions in this direction include \cite{duchi2018minimax} and \cite{wasserman2010statistical}. Within this line of research, particular attention has been given to parametric inference; see, for instance, \cite{amorino2025evolving, asi2022optimal, duchi2019lower, joseph2019locally, kalinin2024efficient}. A central theme is moreover the construction of statistically efficient estimators under privacy constraints.

Although a rigorous definition of local differential privacy is deferred to later in the paper (see \eqref{eq: def LDP1}), we highlight here that the level of privacy is controlled by a parameter \( \alpha \in [0,\infty) \). The case \( \alpha = 0 \) corresponds to perfect privacy, whereas privacy constraints become progressively weaker as \( \alpha \to \infty \). As expected, stronger privacy guarantees (i.e., smaller values of \( \alpha \)) lead to a reduction in statistical information, limiting the extent to which the statistician can exploit the data. This inherent privacy--utility trade-off has motivated extensive research aimed at characterizing optimal privacy levels and, for a fixed privacy budget \( \alpha \), at identifying privacy mechanisms that achieve the best possible statistical performance, according to a particular utility function that one aims at maximizing/minimizing.

In this paper, we focus on the latter problem. Specifically, for a fixed level of privacy \( \alpha \), we investigate the design of privacy mechanisms that are optimal in the sense of maximizing the Fisher information. This optimality criterion has been extensively studied in the literature and plays a central role in our analysis.

In the context of central differential privacy, the early works \cite{Smi08,Smi11} initiated the study of efficient parameter estimation under privacy constraints. In the local differential privacy setting, \cite{BarChe20} derived upper bounds on the Fisher information achievable under LDP, though without constructing explicit privacy mechanisms attaining these bounds. Subsequent progress was made by \cite{NamLee22}, who investigated Fisher information maximization under one-bit communication schemes.

More recently, several works have further advanced the understanding of efficiency under LDP, including \cite{duchi2024right, steinberger2024efficiency,amorino2025factorization}. In particular, \cite{duchi2024right} introduced the notion of \(L_1\)-information to characterize the local minimax risk of regular parametric models under LDP, up to universal constants. Recently, \cite{steinberger2024efficiency} established the existence of asymptotically efficient locally private estimators and provided explicit constructions achieving minimum variance by solving a continuous optimization problem via a discretization approach. Finally, \cite{amorino2025factorization} derived an exact continuous solution to the same optimization problem, thereby eliminating the need for any discretization step.

Across these recent contributions, it has become clear that staircase mechanisms—also referred to as extremal mechanisms—play a central role in optimization problems under privacy constraints (see Definition~\ref{def: staircase matrix} for a precise definition). The one-dimensional staircase mechanism was introduced by \cite{GenVis12}, who showed both theoretically and empirically that its piecewise-constant noise outperforms the Laplace mechanism. This construction was later extended to higher dimensions in \cite{Gen15}, where the authors again demonstrated superiority over Gaussian and Laplace noise for \( d \geq 2 \). More recently, \cite{Kul23} proposed a staircase-like scheme for one-dimensional mean estimation, while \cite{kairouz2016extremal} formulated the privacy--utility trade-off as a constrained optimization problem and proved that, on finite spaces, the class of staircase mechanisms contains the exact optima for a broad family of information-theoretic utility criteria. Staircase mechanisms also play a crucial role in \cite{steinberger2024efficiency} and \cite{kalinin2024efficient}, where they are used to explicitly construct optimal privacy mechanisms that attain the lower bounds derived in the analysis, as well as in \cite{yoon2025fundamental}, which is devoted to discrete distribution estimation under utility-optimized local differential privacy. Moreover, related ideas have been employed in \cite{butucea2025sample} to construct gentle measurements, exploiting the connection between gentleness and differential privacy.

To address non-discrete domains, \cite{amorino2025factorization} introduced the notion of continuous staircase mechanisms and established their effectiveness for Fisher information maximization under LDP in infinite-dimensional settings. In that work, both upper and lower bounds on the Fisher information were derived in the continuous case, and it was shown that these bounds coincide as \( \alpha \to 0 \). Moreover, for fixed \( \alpha \), the maximization problem was shown to admit a solution: specifically, there exists a Radon sub-probability measure \( \bar{\mu} \) on the set of extremal points \( \mathcal{E} \) (see \eqref{eq: set E}) such that the associated continuous staircase mechanism \( q^{(\bar{\mu})} \) solves the Fisher information maximization problem.
{\modar The support of $\bar{\mu}$ is the set of labels where the privacy mechanism outputs the public variables.} Nevertheless, little is currently known about the structure of the measure \( \bar{\mu} \), or about the explicit form of the resulting privacy mechanism and its Fisher information.

This work builds on the observation that the structure of the set of extremal values—most notably the values \(1\) and \(e^{\alpha}\)—plays a fundamental role in determining the behavior of continuous staircase mechanisms.  This insight motivates our focus on measures that are invariant under swapping these extremal values. {\modar It is noticeable that the binomial privacy mechanisms, which are optimal for $\alpha\to0$, are associated to measures satisfying this symmetry property.}
{\modar We demonstrate that a symmetrization procedure can be used to construct a randomization mechanism from any positive Radon measure on the set of extremal points $\mathcal{E}$. Within this framework, binomial mechanisms emerge as the special case where the underlying measure is a Dirac mass centered at a point in $\mathcal{E}$.}
We show that, when the Radon sub-probability measure \( \bar{\mu} \) solving the maximization problem satisfies this symmetry property, the associated Fisher information admits a closed-form expression for any fixed \( \alpha \), {\modar and the mechanism reduces to a binomial mechanism}. This observation naturally leads us to study this form of symmetry in greater depth. In doing so, we uncover that even when the measure solving the optimization problem is not symmetric, the corresponding Fisher information can still be decomposed into a symmetric and an asymmetric component. As to be expected, the asymmetric component vanishes when the measure is symmetric, thus recovering the previous result. More interestingly we show that, in the asymptotic regime $\alpha \to 0$, the symmetric component scales as \( \alpha^2 \), whereas the asymmetric one is of order \( \alpha^3 \). As a consequence, in the high-privacy regime, the contribution of the asymmetric part becomes negligible. This phenomenon no longer holds for larger values of \( \alpha \), which provides one motivation for studying asymmetric privacy mechanisms in detail. Another motivation originates from practical considerations: 
{\modar general $\mathcal{E}$-valued privacy}
mechanisms  can be difficult to implement in practice. With this in mind, in Section~\ref{ss:asymm} we analyze a class of asymmetric privacy mechanisms whose associated measures on \( \mathcal{E} \) are obtained via a pushforward construction from probability measures $\nu$ defined on the original alphabet \( \mathcal{X} \). In this way we are able to implement a large class of privacy mechanisms indexed by $\nu$. Notably, we show that, for such mechanisms, taking the limit \( \alpha \to \infty \) of the associated Fisher information recovers the same amount of information as in the absence of privacy constraints. This is in sharp contrast with the behavior of symmetric two-point mechanisms, for which such a recovery does not occur. To bridge the gap between these theoretical gains and practical utility, we provide a concrete, implementable algorithm for these mechanisms, introducing a tuning parameter \(c\) that continuously interpolates between the symmetric (\(c=1/2\)) and fully asymmetric regimes. Numerical experiments on Gaussian translation models confirm that while symmetry is optimal for strong privacy, breaking symmetry by tuning \(c\) 
reduces variance in low-privacy settings. Finally, we demonstrate the versatility of our framework by constructing a specific asymmetric approximation that converges to the binomial mechanism. This allows our continuous framework to seamlessly replicate the asymptotic optimality of discrete mechanisms as \(\alpha \to 0\), effectively offering a unified approach that remains efficient across the entire privacy spectrum. \\
\\
The paper is organized as follows. Section \ref{s: pb formulation} establishes the mathematical framework, introducing formal definitions for both discrete and continuous extremal mechanisms. In Section \ref{s: main}, we present our main contributions, beginning with a detailed analysis of symmetric privacy mechanisms and their optimality in the high-privacy regime. This naturally leads to the study of asymmetric privacy mechanisms in Section \ref{ss:asymm}, where we demonstrate their advantages as privacy constraints are relaxed. We then turn to practical applications in Sections \ref{ss: Implementation} and \ref{ss:approx}, providing a concrete implementation of our method and developing an asymmetric approximation of the binomial mechanism. Finally, Section \ref{s: proof main} contains the proofs of our main results.

\section{Problem formulation}{\label{s: pb formulation}}

As already mentioned, this paper focuses on efficiency under $\alpha$-LDP constraints and aims to investigate the role of symmetry in this context. The first step is to formally define local differential privacy. The process of transforming raw data into public data is modeled through a conditional distribution, referred to as a privacy mechanism or channel distribution.  

Specifically, let $\mathcal{X}$ and $\mathcal{Z}$ be two separable, complete metric spaces. Equipped with their respective Borel $\sigma$-algebras, they form the measurable spaces $(\mathcal{X}, \Sigma_X)$ and $(\mathcal{Z}, \Sigma_Z)$. The first represents the space of sensitive data to which the privacy mechanism is applied. The resulting output $Z$ lives in $(\mathcal{Z}, \Sigma_Z)$, corresponding to the space of privatized (public) data.  

To formalize the transformation of raw samples into public views, let $X_1, \dots, X_n$ be i.i.d.\ private observations taking values in a domain $\mathcal{X}$.
While privacy mechanisms generally fall into two categories—interactive and non-interactive—this work focuses primarily on the non-interactive setting.
However, for the sake of completeness, we first recall the definition of a sequentially interactive mechanism. In this broader context, the privatization of the $i$-th observation allows for dependency not only on the current private datum $X_i$, but also on the history of previously released public outputs $Z_1, \dots, Z_{i-1}$. This results in the following conditional independence structure:
\[
X_i, Z_1, \dots, Z_{i-1} \rightarrow Z_i, \qquad Z_i \perp X_k \mid X_i, Z_1, \dots, Z_{i-1} \quad \text{for } k \neq i.
\]
Hence, the privatized output $Z_i$ is drawn according to
\[
Z_i \sim Q_i(\cdot \mid X_i = x_i, Z_1 = z_1, \dots, Z_{i-1} = z_{i-1}),
\]
for a collection of Markov kernels $Q_i : \Sigma_Z \times \mathcal{X} \times \mathcal{Z}^{i-1} \rightarrow [0,1]$.  

In contrast, in the non-interactive setting, the public variable $Z_i$ depends solely on the corresponding raw value $X_i$ and is independent of the previously generated variables $Z_1, \dots, Z_{i-1}$. Consequently, the Markov kernel $Q$ does not depend on $i$, and the same mechanism is applied to all observations. Therefore, for each $i \in \{1, \dots, n\}$, the privatized output is drawn as
\begin{equation}\label{eq: drawn Z nonint}
    Z_i \sim Q(\cdot \mid X_i = x_i).
\end{equation}
Observe that both interactive and non-interactive mechanisms have been widely studied in the literature. On one hand, it is intuitive that working with non-interactive mechanisms is simpler, since starting from independent raw data results in independent public data, which is not the case for interactive mechanisms. On the other hand, the extra dependence in the interactive setting is due to the fact that each $Z_i$ carries not only information about $X_i$ but also about $X_1, \dots, X_{i-1}$, which can enhance statistical inference.

What is important in both cases is that the concept of local differential privacy allows us to quantify the amount of privacy injected into the system, through the parameter~$\alpha$. This naturally leads to the definition of $\alpha$-local differential privacy. Given a parameter $\alpha \ge 0$, we say that a random variable $Z_i$ is an $\alpha$-locally differentially private version of $X_i$ if, for all $z_1, \dots, z_{i-1} \in \mathcal{Z}$ and all $x,x' \in \mathcal{X}$, the following condition holds:
\begin{equation}{\label{eq: def LDP1}}
\sup_{A \in \Sigma_Z} \frac{Q_i\bigl(A \mid X_i = x, Z_1 = z_1, \dots, Z_{i-1} = z_{i-1}\bigr)}
{Q_i\bigl(A \mid X_i = x', Z_1 = z_1, \dots, Z_{i-1} = z_{i-1}\bigr)}
\le \exp(\alpha).    
\end{equation}

Recall that we have denoted by $\mathcal{Q}_\alpha$ the set of all Markov kernels satisfying this local $\alpha$-differential privacy constraint. This set will play a central role, as our optimization problem will involve searching for the privacy mechanism within $\mathcal{Q}_\alpha$ that maximizes the Fisher information.  

In the non-interactive case, the definition of $\alpha$-LDP simplifies to
\[
\sup_{A \in \Sigma_Z} \frac{Q(A \mid X = x)}{Q(A \mid X = x')} \le \exp(\alpha).
\]
We underline that the parameter $\alpha$ quantifies the strength of privacy: the smaller the value, the harder it is to infer sensitive information from the released data. In particular, $\alpha = 0$ corresponds to perfect privacy, while letting $\alpha \to \infty$ gradually relaxes the privacy constraint.  

Under the local differential privacy condition, the kernels $Q(\cdot \mid X = x)$ are mutually absolutely continuous for all $x \in \mathcal{X}$. Therefore, we may assume the existence of a dominating measure~$\mu$ on $(\mathcal{Z}, \Sigma_Z)$ with respect to which each kernel admits a density, denoted $q(x,z)$. The $\alpha$-local differential privacy constraint can then be rewritten in terms of these densities, giving rise to the following equivalent formulation, which will be the one we use most frequently in the sequel:
\begin{equation}\label{eq: def LDP}
\sup_{z \in \mathcal{Z}} \frac{q(x,z)}{q(x',z)} \le \exp(\alpha), \quad \forall x, x' \in \mathcal{X}.
\end{equation}
In what follows, we will write $q(x,z)$ to denote the density $q(z \mid X = x)$.  

Finally, note that if $q(x,z) = 0$ for some $x \in \mathcal{X}$ and $z \in \mathcal{Z}$, then the $\alpha$-LDP constraint implies that $q(x',z) = 0$ for all $x' \in \mathcal{X}$. In other words, such a value $z$ could be removed from $\mathcal{Z}$ without altering the randomization mechanism.

\subsection{The staircase mechanism}

Having established the necessary background on data privatization, we now turn to the problem of efficient parameter estimation under $\alpha$-LDP. This problem has been widely investigated in the literature, as discussed in the introduction. In this context, staircase mechanisms play a central role. Before proceeding with our analysis, we recall their definition in the finite-alphabet setting.  

A key reference on this topic is \cite{kairouz2016extremal}, which shows that when $\mathcal{X}$ is finite, the optimal mechanism maximizing utility functions expressible as sums of sublinear functions has an extremal structure. We will then briefly mention their extension to infinite-dimensional settings, as recently proposed in \cite{amorino2025factorization}.  

Following \cite{kairouz2016extremal}, a randomization mechanism is called extremal if, for all $z \in \mathcal{Z}$ and all $(x, x') \in \mathcal{X}^2$, the log-likelihood ratios satisfy
\[
\left| \ln \frac{q(x',z)}{q(x,z)} \right| \in \{0, \alpha\},
\]
which is equivalent to the condition
\begin{equation}\label{eq : extremal constrainte discret}
\forall z \in \mathcal{Z}, \ \forall (x,x') \in \mathcal{X}^2, \quad \frac{q(x',z)}{q(x,z)} \in \{e^{-\alpha}, 1, e^\alpha\}.
\end{equation}
Any mechanism satisfying this constraint is referred to as a staircase mechanism. One of the main reasons staircase mechanisms have been so intensively studied is that they saturate the $\alpha$-LDP constraint, which naturally places them in a strong position for proving optimality in constrained optimization problems. In particular, the so-called staircase pattern matrix, as defined below, appears naturally in such problems (see, e.g., Theorem 4 in \cite{kairouz2016extremal} or Section 2.1.3 of \cite{kalinin2024efficient}).

\begin{df}\label{def: staircase matrix}
For $\beta \in \{0, \dots, 2^d - 1\}$, let $d_j(\beta)$ denote the $j$-th component of the $d$-dimensional binary vector corresponding to the dyadic expansion of $\beta$, that is,
\[
\beta = \sum_{j=0}^{d-1} d_j(\beta) 2^j, \quad \text{with } d_j(\beta) \in \{0, 1\}.
\]
A matrix is called a staircase pattern matrix if its $\beta$-th column is a vector $r_\beta \in \{1, e^\alpha\}^d$ for each $\beta \in \{0, \dots, 2^d - 1\}$, where the $j$-th component of $r_\beta$ is defined by
\[
(r_\beta)_j = \mathds{1}_{\{d_j(\beta) = 0\}} + e^\alpha \, \mathds{1}_{\{d_j(\beta) = 1\}}.
\]
Each such column vector $r_\beta$ is called a staircase pattern.
\end{df}

To clarify this construction, consider the case $d = 3$. There are $2^d = 8$ staircase patterns, and the corresponding staircase pattern matrix is
\[
\begin{bmatrix}
1 & 1 & 1 & 1 & e^\alpha & e^\alpha & e^\alpha & e^\alpha \\
1 & 1 & e^\alpha & e^\alpha & 1 & 1 & e^\alpha & e^\alpha \\
1 & e^\alpha & 1 & e^\alpha & 1 & e^\alpha & 1 & e^\alpha
\end{bmatrix}.
\]
This structure can be described for any $d$ in a general way by introducing the following vectors $r$
\begin{equation}\label{eq: R_matrix_by_slice}
    r_0 = \begin{bmatrix} 1 \\ 1 \\ 1 \\ \vdots \\ 1 \end{bmatrix}, \quad
    r_1 = \begin{bmatrix} e^\alpha \\ 1 \\ 1 \\ \vdots \\ 1 \end{bmatrix}, \quad
    r_2 = \begin{bmatrix} 1 \\ e^\alpha \\ 1 \\ \vdots \\ 1 \end{bmatrix}, \quad \dots \quad
    r_{2^d - 1} = \begin{bmatrix} e^\alpha \\ e^\alpha \\ e^\alpha \\ \vdots \\ e^\alpha \end{bmatrix}.
\end{equation}
One sees that the vectors $\bigl(r_\beta\bigr)_{\beta \in \{0, \dots, 2^d - 1\}}$ correspond to the extremal points of the hyper-rectangle $[1, e^\alpha]^d$. In the finite-dimensional case, one can define $\mathcal{E} = \{0, \dots, 2^d - 1\}$ as the index set of these extremal patterns $\{r_0, \dots, r_{2^d - 1}\}$.  

In particular, if $\mathcal{X}$ has finite cardinality, say $\mathcal{X} = \{ x_1, \dots, x_d \}$, each $r_\beta$ can be interpreted as a function on $\mathcal{X}$, with
\[
r_\beta(x_j) = (r_\beta)_j, \quad \text{for } x_j \in \mathcal{X}.
\]
For each $\beta \in \mathcal{E}$, the level sets
\begin{equation}\label{eq: Fbeta+}
    F^+_\beta = \{ x \in \mathcal{X} \mid r_\beta(x) = e^\alpha \}, \quad
    F^-_\beta = \{ x \in \mathcal{X} \mid r_\beta(x) = 1 \}
\end{equation}
play a crucial role, so that for all $x \in \mathcal{X}$, the function $r_\beta$ satisfies
\begin{equation}\label{eq: r_beta_x}
    r_\beta(x) = e^\alpha \mathds{1}_{F^+_\beta}(x) + \mathds{1}_{F^-_\beta}(x)
    = 1 + \bigl(e^\alpha - 1\bigr) \mathds{1}_{F^+_\beta}(x),
\end{equation}
i.e., the function $r_\beta$ takes values in $\{1, e^\alpha\}$ with its structure fully determined by $F^+_\beta$.  

Since here we focus on the setting where $\mathcal{X}$ may be continuous, we rely on the extension of staircase mechanisms introduced in Section 3 of \cite{amorino2025factorization}. In this case, the hyper-rectangle $[1, e^\alpha]^d$ of the finite setting is replaced by the set
\begin{equation}{\label{eq: set B}}
B := \left\{ v : \mathcal{X} \to \mathbb{R} \ \text{measurable} \;\bigg|\; 1 \le v(x) \le e^\alpha \ \text{a.e.} \right\} \subset L^\infty(\mathcal{X}, \mathbb{R}),
\end{equation}
and the extremal points of $[1, e^\alpha]^d$ are replaced by the extremal points of $B$, namely
\begin{equation}{\label{eq: set E}}
\mathcal{E} := \left\{ r : \mathcal{X} \to \mathbb{R} \ \text{measurable} \;\bigg|\; r(x) \in \{1, e^\alpha\} \ \text{a.e.} \right\}.
\end{equation}
Analogously to the level sets in \eqref{eq: Fbeta+} and the associated vectors in \eqref{eq: r_beta_x}, for any $r \in \mathcal{E}$ we define
\begin{equation}{\label{eq: Fr+ cont}}
F^+_r := r^{-1}\bigl(\{ e^\alpha \}\bigr), \quad
F^-_r := r^{-1}\bigl(\{ 1 \}\bigr),
\end{equation}
so that $dx$-almost everywhere we can write
\begin{equation}\label{eq: def r(x)}
    r(x) = e^\alpha \mathds{1}_{F^+_r}(x) + \mathds{1}_{F^-_r}(x)
    = 1 + \bigl(e^\alpha - 1\bigr) \mathds{1}_{F^+_r}(x).
\end{equation}
Finally, note that we equip $B$ {\modar with the metric topology of weak-$\star$ convergence and consider the associated Borel $\sigma$-algebra $\mathcal{B}(B)$, which makes the set $\mathcal{E}$ measurable (see details in \cite{amorino2025factorization}).}
\\
In this way, we obtain a construction that provides the analogue of a staircase pattern, as in Definition~\ref{def: staircase matrix}, but now in the infinite-dimensional setting. Our goal is to leverage this construction to describe an extremal (staircase) mechanism, with a particular focus on the symmetry of the associated measures. To do so, it is essential to evaluate elements of the set $B$ at a point $x \in \mathcal{X}$, which motivates the introduction of the following evaluation operator:
\[
e : 
\begin{cases}
(\mathcal{X} \times B, \mathcal{B}(\mathcal{X}) \otimes \mathcal{B}(B)) \to \mathbb{R}, \\
(x, r) \mapsto {\modar e_x(r),} 
\end{cases}
\]
{\modar which is }
measurable and satisfies, {\modar for all $r\in B $,} $e_x(r) = r(x)$ for almost every $x$.  

We are now ready to introduce extremal randomizations in the continuous setting. For any non-negative Borel measure $\mu$ on $\mathcal{E}$ satisfying
\begin{equation} \label{eq : cond norm mu continue}
    \text{for a.e.\ } x \in \mathcal{X}, \quad \int_{\mathcal{E}} e_x(r) \, \mu(dr) = 1,
\end{equation}
we define a randomization mechanism from $\mathcal{X}$ to $\mathcal{E}$ by means of the kernel
\begin{equation}{\label{eq: def mechanism measure}}
    q^{(\mu)}(x,dr) = e_x(r) \, \mu(dr).
\end{equation}
{\modar The condition \eqref{eq : cond norm mu continue} is here crucial to ensure that the measure $q^{(\mu)}(x,dr)$ defines a randomization
	on $\mathcal{E}$ for almost every $x\in\mathcal{X}$.}
	One can check that this randomization is extremal in the same sense as in~\eqref{eq : extremal constrainte discret}, since
\[
\frac{q^{(\mu)}(x,dr)}{q^{(\mu)}(x',dr)} 
= \frac{e_x(r)}{e_{x'}(r)} 
= \frac{r(x)}{r(x')} \in \{ e^{-\alpha}, 1, e^{\alpha} \}
\quad \text{for a.e.\ } dx \, dx'.
\]
Moreover, since $e_x(r) \in [1, e^\alpha]$, condition~\eqref{eq : cond norm mu continue} implies the bounds
\begin{equation}\label{eq: bound mu E}
    e^{-\alpha} \le \mu(\mathcal{E}) \le 1,
\end{equation}
which will be useful in the sequel.

\subsection{Parametric models and Fisher information}
In the following, it will be crucial to work with models satisfying appropriate regularity conditions. In this subsection, we begin by detailing the notion of regularity we have in mind.  

Assume that $\mathcal{X}$ is an open subset of $\mathbb{R}^d$ and that $(P_{\theta})_{\theta \in \Theta}$ is a family of probability measures on $\mathcal{X}$, with $\Theta \subset \mathbb{R}^{d_\Theta}$. We suppose that this family is dominated by the Lebesgue measure, and denote its density by
\[
p_{\theta}(x) := \frac{dP_\theta(x)}{dx}.
\]
Let $\alpha > 0$ and let $\mu$ be a Radon sub-probability measure on $\mathcal{E}$ satisfying \eqref{eq : cond norm mu continue}. We consider the associated privacy mechanism $q^{(\mu)}$ as introduced in \eqref{eq: def mechanism measure}, denote by $Z$ the corresponding public data, and by $(\widetilde{P}_{\theta})_{\theta \in \Theta}$ the induced model on $\mathcal{E}$, i.e., the law of $Z$.  

This model is regular in the sense of differentiability in quadratic mean. We recall here the definition of differentiability in quadratic mean (DQM) at an interior point $\theta_0 \in \overset{\circ}{\Theta}$.  

\begin{df}\label{def: DQM}
A statistical model $\mathcal{P} = (P_\theta)_{\theta \in \Theta}$, with $\Theta \subset \mathbb{R}^{d_\Theta}$, sample space {\modar $(\mathcal{Y}, \mathcal{G})$,} and dominating measure $\tilde{\mu}$, is said to be differentiable in quadratic mean (DQM) at $\theta_0 \in \overset{\circ}{\Theta}$ if the $\tilde{\mu}$-densities $p_\theta = \frac{dP_\theta}{d\mu}$ satisfy, for $h \in \mathbb{R}^{d_\Theta}$,
\[
\int_\mathcal{X} \left(
\sqrt{p_{\theta_0+h}(x)} - \sqrt{p_{\theta_0}(x)} - \tfrac{1}{2} h^{\top}s_{\theta_0}(x) \sqrt{p_{\theta_0}(x)}
\right)^2 \tilde{\mu}(dx) = o(\|h\|^2)
\quad \text{as } h \to 0,
\]
for some measurable vector-valued function $s_{\theta_0} : \mathcal{X} \to \mathbb{R}^{d_\Theta}$, which is called the score at $\theta_0$.
\end{df}

A model is said to be DQM if it is DQM at every point $\theta \in \overset{\circ}{\Theta}$. It will be crucial for our proofs to note that if the model $\mathcal{P}$ is DQM at $\theta_0$, then its score function is centered, that is
\begin{equation}\label{eq: score centered}
    \mathbb{E}_{\theta_0}\bigl[s_{\theta_0}\bigr] = 0,
\end{equation}
and the Fisher information exists, is finite, and takes the form
\[
\mathcal{I}_{\theta_0}(\mathcal{P}) = \mathbb{E}_{\theta_0}\bigl[s_{\theta_0} s_{\theta_0}^{\top}\bigr].
\]
A proof of these results can be found in Theorem~7.2 of \cite{Van07}.

If one considers transformations of DQM models via Markov channels satisfying local differential privacy, Lemma~3.1 of \cite{steinberger2024efficiency} shows that if the original model $(P_\theta)_\theta$ is DQM at $\theta_0$ {\modar with dominating measure $dx$}, then the privatized model $(q \circ P_\theta)_\theta$ is also DQM at $\theta_0$  {\modar with dominating measure $\mu$. The score function of the latter is} 
\begin{equation}\label{eq : score public}
    t_{\theta_0}(z) = \mathbb{E}\left[ s_{\theta_0}(X) \mid Z = z \right]
    = \frac{
        \int_\mathcal{X} s_{\theta_0}(x) \, q(x,z) \, p_{\theta_0}(x) \, \mu(dx)
    }{
        \int_\mathcal{X} q(x,z) \, p_{\theta_0}(x) \, \mu(dx)
    }.
\end{equation}
Moreover, according to Lemma~1 of \cite{amorino2025factorization}, the law of the public variable $Z$ that is the induced model $(\widetilde{P}_{\theta})_{\theta \in \Theta}$ on $\mathcal{E}$, admits a continuous density {\modar with respect to the reference measure $\mu$}
\begin{equation}\label{eq : tilde p ctn}
    \widetilde{p}_\theta(r) := \frac{d\widetilde{P}_{\theta}}{d\mu}(r)
    = \int_\mathcal{X} r(x) \, p_\theta(x) \, dx,
\end{equation}
which is differentiable in quadratic mean and has an explicit continuous score function
\begin{equation}\label{eq : score t ctn}
    t_{\theta_0}(r) = \frac{
        \int_\mathcal{X} s_{\theta_0}(x) \, r(x) \, p_{\theta_0}(x) \, dx
    }{
        \widetilde{p}_{\theta_0}(r)
    }
    = (e^\alpha - 1) \, \frac{
        \int_{F_r^+} s_{\theta_0}(x) \, p_{\theta_0}(x) \, dx
    }{
        \widetilde{p}_{\theta_0}(r)
    }.
\end{equation}
This allows us to write the Fisher information explicitly:
\begin{align}{\label{eq : Fisher ctn general_preli}}
    \mathcal{I}_{\theta_0}\bigl(q^{(\mu)} \circ \mathcal{P}\bigr)
    &= \mathbb{E}_{\theta_0}\left[
        t_{\theta_0}(Z) \, t_{\theta_0}(Z)^{\top}
    \right] \nonumber \\
    &= \int_\mathcal{E}
    t_{\theta_0}(r) \, t_{\theta_0}(r)^{\top} \,
    \widetilde{p}_{\theta_0}(r) \, \mu(dr) \nonumber \\
    &= \int_\mathcal{E}
    \frac{
        \left( \int_\mathcal{X} s_{\theta_0}(x) \, r(x) \, p_{\theta_0}(x) \, dx \right)
        \left( \int_\mathcal{X} s_{\theta_0}(x) \, r(x) \, p_{\theta_0}(x) \, dx \right)^{\top}
    }{
        \widetilde{p}_{\theta_0}(r)
    }
    \mu(dr) \\
    &=
    (e^\alpha - 1)^2
    \int_\mathcal{E}
    \frac{
        \left( \int_{F_r^+} s_{\theta_0}(x) \, p_{\theta_0}(x) \, dx \right)
        \left( \int_{F_r^+} s_{\theta_0}(x) \, p_{\theta_0}(x) \, dx \right)^{\top}
    }{
        \widetilde{p}_{\theta_0}(r)
    }
    \mu(dr). \nonumber
\end{align}
With all this background in place, we are finally ready to discuss the optimization problem for the Fisher information, which consists of searching for the privacy mechanism achieving maximal Fisher information. This search for the optimal privacy mechanism $q^{(\mu)}$ can be rephrased as looking for the optimal measure $\mu$ associated with such a mechanism.  

Let us consider the case where the parameter $\theta_0$ is one-dimensional, and define, for any $r \in B$,
\begin{equation}\label{eq : def i r ctn}
    i(r) := \frac{
        \left( \int_\mathcal{X} s_{\theta_0}(x) \, r(x) \, p_{\theta_0}(x) \, dx \right)^2
    }{
        \int_\mathcal{X} r(x) \, p_{\theta_0}(x) \, dx
    }
    = (e^\alpha - 1)^2
    \frac{
        \left( \int_{F_r^+} s_{\theta_0}(x) \, p_{\theta_0}(x) \, dx \right)^2
    }{
        1 + (e^\alpha - 1) \int_{F_r^+} p_{\theta_0}(x) \, dx
    },
\end{equation}
where we have used \eqref{eq: def r(x)}. This leads to the following optimization problem:
\begin{equation}\label{eq : optimisation pb ctn}
    I^* := \sup_{\mu}
    \int_\mathcal{E} i(r) \, \mu(dr),
    \quad
    \text{subject to } \int_\mathcal{E} e_x(r) \, \mu(dr) = 1
    \quad \text{for $dx$-almost every } x,
\end{equation}
where the supremum is taken over all Radon sub-probability measures $\mu$ on $\mathcal{E}$.  

From Theorem~2 of \cite{amorino2025factorization}, we know that this optimization problem admits a solution, i.e., there exists a Radon sub-probability measure $\overline{\mu}$ on $\mathcal{E}$ such that the mechanism defined by
\[
q^{(\overline{\mu})}(x,dr) := e_x(r) \, \overline{\mu}(dr)
\]
satisfies
\[
\mathcal{J}^{\text{max},\alpha}_{\theta_0}
= \mathcal{I}_{\theta_0}\left( q^{(\overline{\mu})} \circ \mathcal{P} \right).
\]
Although the existence of a privacy mechanism solving this optimization problem is guaranteed, little can be said about its explicit form.  

The main purpose of this paper is to investigate in greater detail this optimal Fisher information and to gain insights into the privacy mechanism achieving it. In particular, we will see that by introducing a notion of symmetry for the optimal measure, further structural insights can be obtained. 

\section{Main results}{\label{s: main}}

From the background developed in the previous section, it is clear that the structure of the set $\mathcal{E}$ plays a central role in the behavior of the Fisher information {\modar and on the definition of the privacy mechanism.}
	In both the discrete and continuous settings, the extremal values $1$ and $e^\alpha$ emerge as particularly significant. This naturally raises the question: can the analysis be simplified by considering measures on $\mathcal{E}$ that are, in some sense, indifferent to the distinction between these two values? Can this help to construct normalized Radon measures $\mu$ on $\mathcal{E}$?

To formalize this idea, let us consider Radon measures $\mu$ on $\mathcal{E}$ satisfying the normalization condition
\[
\int_\mathcal{E} e_x(r)\, \mu(dr) = 1 \quad \text{for a.e.\ } dx,
\]
and let us introduce the operator $T: \mathcal{E} \to \mathcal{E}$ defined by
\begin{equation}{\label{eq: def T sym}}
 T(r) := e^\alpha + 1 - r,   
\end{equation}
which essentially swaps the values $1$ and $e^\alpha$: for each $x$, we have $T(r)(x) = 1$ if and only if $r(x) = e^\alpha$. This operator captures a natural form of symmetry with respect to the extremal values in $\mathcal{E}$. Given a Radon measure $\mu$ on $\mathcal{E}$, we define the pushforward measure $T(\mu)$ by
\[
T(\mu) := \mu \circ T,
\]
so that for any integrable function $g$ on $\mathcal{E}$,
\[
\int_\mathcal{E} g(r)\, T(\mu)(dr) = \int_\mathcal{E} g(T(r))\, \mu(dr).
\]
{We begin by introducing the formal notion of symmetry that serves as a cornerstone for this work.
\begin{df}\label{def: sym mu}
Let $\mu$ be a Radon measure on $\mathcal{E}$ satisfying the normalization condition \eqref{eq : cond norm mu continue}. We say that $\mu$ is symmetric if it is invariant under the transformation $T$, i.e.,
\[
\mu = T(\mu).
\]
\end{df}
Although symmetry is defined via $T$-invariance, it is important to observe that $T(\mu)$ does not, in general, satisfy the normalization condition. To see this, we compute:
\begin{align*}
\int_\mathcal{E} e_x(r)\, T(\mu)(dr) 
&= \int_\mathcal{E} T(e_x(r))\, \mu(dr) 
= \int_\mathcal{E} \big(e^\alpha + 1 - e_x(r)\big)\, \mu(dr) \\
&= (e^\alpha + 1)\mu(\mathcal{E}) - \int_\mathcal{E} e_x(r)\, \mu(dr) .
\end{align*}
This discrepancy suggests that any arbitrary measure must be appropriately transformed to satisfy both symmetry and normalization simultaneously. As it turns out, any Radon measure $\mu$—irrespective of its initial properties—can be mapped to such a state via the following construction:
\begin{equation}\label{eq: mu(s)}
\mu^{(s)} := \frac{1}{(e^\alpha + 1)\mu(\mathcal{E})} \big( \mu + T(\mu) \big).
\end{equation}
The fact that $\mu^{(s)}$ is both normalized and symmetric (in the sense of Definition \ref{def: sym mu}) is established in the following proposition, the proof of which is provided in Section \ref{s: proof main}.}

\begin{prop}\label{prop: symmetrisée}
Let $\mu$ be a Radon measure on $\mathcal{E}$, and define $\mu^{(s)}$ as in \eqref{eq: mu(s)}. Then the following statements hold:
\begin{enumerate}
    \item The measure $\mu^{(s)}$ is normalized in the sense of \eqref{eq : cond norm mu continue} and symmetric in the sense of Definition~\ref{def: sym mu}.
    \item If the original measure $\mu$ is already symmetric (i.e., $\mu = T(\mu)$) {and normalized}, then $\mu(\mathcal{E}) = \frac{2}{e^\alpha + 1}$, and we recover $\mu^{(s)} = \mu$.
\end{enumerate}
\end{prop}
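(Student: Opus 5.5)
The plan is a direct computation resting on two facts about $T$: it is an involution, $T\circ T=\mathrm{id}$, and it acts pointwise on evaluations, $e_x(T(r)) = e^\alpha+1-e_x(r)$. Throughout I assume $0<\mu(\mathcal{E})<\infty$, which is implicit in \eqref{eq: mu(s)} since the definition divides by $\mu(\mathcal{E})$.

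First I check that $T$ maps $\mathcal{E}$ to itself and is Borel measurable for the weak-$\star$ topology. $T$ is the affine map $r\mapsto (e^\alpha+1)\mathbf 1 - r$. It is weak-$\star$ continuous on $L^\infty$, maps $B$ onto $B$, and swaps the values $1$ and $e^\alpha$, so it preserves $\mathcal{E}$. Hence $T(\mu)$ is a well-defined Radon measure on $\mathcal{E}$. Since $T$ is a bijection of $\mathcal{E}$ and an involution, $T(\mu)(\mathcal{E})=\mu(\mathcal{E})$ and $T(T(\mu))=\mu$.

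Next comes the evaluation identity. For each fixed $r$, one has $e_x(T(r)) = T(r)(x) = e^\alpha+1-r(x) = e^\alpha+1-e_x(r)$ for a.e.\ $x$. The exceptional $x$-set depends on $r$, so I need the identity for a.e.\ $x$ simultaneously for $\mu$-a.e.\ $r$. I would obtain this by Fubini on $\mathcal{X}\times\mathcal{E}$ with the measure $dx\otimes\mu$, restricted to a set of finite $dx$-mass. Joint measurability of $e$ makes the set $N=\{(x,r): e_x(T(r))\neq e^\alpha+1-e_x(r)\}$ measurable. Each $r$-section of $N$ is $dx$-null, so $N$ is null for $dx\otimes\mu$. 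Hence for a.e.\ $x$ the $x$-section is $\mu$-null, which is exactly what is needed inside $\int_{\mathcal{E}}\cdot\,\mu(dr)$. This bookkeeping is the only delicate step; the rest is algebra.

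With this in hand, for a.e.\ $x$,
\[
\int_{\mathcal{E}} e_x(r)\,\bigl(\mu+T(\mu)\bigr)(dr) = \int_{\mathcal{E}} e_x(r)\,\mu(dr) + \int_{\mathcal{E}}\bigl(e^\alpha+1-e_x(r)\bigr)\,\mu(dr) = (e^\alpha+1)\,\mu(\mathcal{E}).
\]
Dividing by $(e^\alpha+1)\mu(\mathcal{E})$ gives the normalization \eqref{eq : cond norm mu continue} for $\mu^{(s)}$. Notably, this does not require $\mu$ itself to be normalized. For symmetry, linearity of the pushforward together with $T(T(\mu))=\mu$ gives $T(\mu+T(\mu)) = T(\mu)+\mu$, and the normalizing constant is unchanged. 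Hence $T(\mu^{(s)})=\mu^{(s)}$.

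For part 2, assume $\mu=T(\mu)$ and that $\mu$ is normalized. I would use the same evaluation identity:
\[
1=\int_{\mathcal{E}} e_x(r)\,\mu(dr) = \int_{\mathcal{E}} e_x(r)\,T(\mu)(dr) = \int_{\mathcal{E}} e_x(T(r))\,\mu(dr) = (e^\alpha+1)\,\mu(\mathcal{E}) - 1.
\]
This gives $\mu(\mathcal{E})=2/(e^\alpha+1)$. Substituting into \eqref{eq: mu(s)} yields
\[
\mu^{(s)} = \frac{2\mu}{(e^\alpha+1)\cdot 2/(e^\alpha+1)} = \mu.
\]
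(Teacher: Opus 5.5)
Your proposal is correct and follows the paper's proof essentially step by step. The identity $\int_{\mathcal{E}} e_x(r)\,T(\mu)(dr)=(e^\alpha+1)\mu(\mathcal{E})-\int_{\mathcal{E}} e_x(r)\,\mu(dr)$ yields normalization, the involution property $T\circ T=\mathrm{id}$ yields symmetry, and equating the two expressions under $\mu=T(\mu)$ gives $\mu(\mathcal{E})=2/(e^\alpha+1)$. Your extra checks only add rigor to points the paper passes over silently: that $T$ preserves $\mathcal{E}$ and is Borel, the Fubini argument for the $r$-dependent exceptional $x$-sets, and the standing assumption $0<\mu(\mathcal{E})<\infty$.
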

{\begin{rem}
It is worth emphasizing that the construction of $\mu^{(s)}$ is entirely self-correcting. The symmetry and normalization properties are enforced by the operator $T$ and the choice of the denominator in \eqref{eq: mu(s)}, regardless of whether the initial measure $\mu$ possesses any specific structure or even a unit mass.
\end{rem}}
Observe that, in the discrete setting, a key role is played by the two particular level sets:
\[
F_{\max}^+ := \left\{ x \in \mathcal{X} : s_{\theta_0}(x) > 0 \right\}
\quad \text{and} \quad
F_{\max}^{\prime +} := \left\{ x \in \mathcal{X} : s_{\theta_0}(x) < 0 \right\}.
\]
These sets are crucial because, in the numerator of the Fisher information, the following quantity appears (see Equation \eqref{eq : Fisher ctn general_preli}):
\[
\left( \int_\mathcal{X} s_\theta(x) \, \mathbf{1}_{F^+_r}(x) \, p_\theta(x) \, {\modar dx} \right)^2
\le
\left( \frac{1}{2} \int_\mathcal{X} |s_\theta(x)| \, p_\theta(x) \,  {\modar dx} \right)^2
= \left( \frac{1}{2} \mathbb{E}\bigl[|s_\theta(X)|\bigr] \right)^2,
\]
with equality {\modar if}   
\( F_r^+ = F_{\max}^+ \) or \( F_r^+ = F_{\max}^{\prime +} \).  

Among the collection of sets \( (F_r^+)_{r \in \mathcal{E}} \) introduced in~\eqref{eq: Fbeta+}, one can denote by \( r_{\max} \) and \( r_{\max}' \) the indices corresponding to \( F_{\max}^+ \) and \( F_{\max}^{\prime +} \), respectively. These provide natural candidates for the support of a solution to the optimization problem in the discrete case (see Remark~3 in \cite{amorino2025factorization} for further details). In particular, Proposition~7 of \cite{amorino2025factorization} uses this insight to establish an upper bound for the Fisher information, with a proof that is quite technical, showing that the maximum of the optimization problem is indeed attained by a sub-probability measure supported on the set \( \{ r_{\max}, r_{\max}' \} \). Proposition~8 of \cite{amorino2025factorization} is then devoted to proving a lower bound, and in particular explicitly constructs a privacy mechanism achieving this (optimal) Fisher information, based on the these two points.  

\begin{rem}{\label{rk: symmetry start}}
It is worth noting that the two-point measures discussed above, commonly used to establish the lower bound, illustrate the power of the symmetrized measure~$\mu^{(s)}$. Indeed, these are obtained by symmetrizing the naturally proposed choice
\[
\mu = \delta_r, \quad
\text{with } r = 1 + (e^\alpha - 1) \indi{F_{\max}^+}
\]
where \( F_{\max}^+ \) is as defined above. 
{\modar In Proposition 7 of \cite{amorino2025factorization}, we establish that this two points symmetric measure is optimal in a setting of a finite set $\mathcal{X}$ as soon as  $\alpha$ is small enough. Also, \cite{kalinin2024efficient} gives an example of two points symmetric mechanism optimal when $\mathcal{X}=\mathbb{R}$.}
\end{rem}

The previously established Proposition~\ref{prop: symmetrisée} enables the construction of a wide class of normalized and symmetric measures on $\mathcal{E}$, which in turn yield a variety of symmetric randomization mechanisms. The next result illustrates why such symmetry is particularly useful for our analysis: when the maximizer $\mu$ of the optimization problem \eqref{eq : optimisation pb ctn} (whose existence is guaranteed by Theorem 2 of \cite{amorino2025factorization}) is symmetric, the corresponding Fisher information attains an explicit and simplified expression.

To state this result, we first introduce a convenient notation. Recall the function $i(r)$ defined in \eqref{eq : def i r ctn}. Then, following the construction of the symmetrized measure $\mu^{(s)}$ in \eqref{eq: mu(s)}, we define the symmetrized version of $i$ as:
\begin{equation}\label{eq: i(s) r}
	i^{(s)}(r) := {\modar \frac{1}{2}} \left[ i(r) + i(T(r)) \right].
\end{equation}
This quantity plays a central role in simplifying the expression of the Fisher information under symmetry, as detailed in the theorem below.

\begin{thm}\label{thm: Fisher sym}
Assume that the maximizer $\bar{\mu}$ of the optimization problem \eqref{eq : optimisation pb ctn} is symmetric in the sense of Definition~\ref{def: sym mu}. Then, the function $r \mapsto i^{(s)}(r)$ is $\bar{\mu}$-almost everywhere constant, and the maximum Fisher information is given by
\begin{align*}
\mathcal{J}_{\theta_0}^{\max, \alpha} &= \frac{2}{e^\alpha + 1} (\max_{r \in B} i^{(s)}(r)), 
\end{align*}
with
\begin{equation}{\label{eq: i(s)r}}
i^{(s)}(r) = \frac{(e^\alpha - 1)^2}{2} \frac{(\int_{F_r^+}s_{\theta_0}(x)p_{\theta_0}(x) dx)^2}{(e^\alpha  -(e^\alpha - 1) \int_{F_r^+}p_{\theta_0}(x)dx)(1 + (e^\alpha - 1) \int_{F_r^+}p_{\theta_0}(x)dx)}.
\end{equation}
\end{thm}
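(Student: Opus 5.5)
Since $\bar\mu=T(\bar\mu)$, we first symmetrize the objective. We have
\[
\int_{\mathcal E} i(r)\,\bar\mu(dr)=\int_{\mathcal E} i(T(r))\,\bar\mu(dr),
\]
and therefore $\mathcal J^{\max,\alpha}_{\theta_0}=\int_{\mathcal E} i^{(s)}(r)\,\bar\mu(dr)$. By Proposition~\ref{prop: symmetrisée}, a symmetric normalized measure has total mass $\bar\mu(\mathcal E)=2/(e^\alpha+1)$. This immediately gives the upper bound
\[
\mathcal J^{\max,\alpha}_{\theta_0}\le \frac{2}{e^\alpha+1}\,\sup_{r\in\mathcal E} i^{(s)}(r),
\]
and equality in this bound forces $i^{(s)}=\sup i^{(s)}$ for $\bar\mu$-almost every $r$. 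That is precisely the claimed a.e.\ constancy, provided we prove the matching lower bound and show that the supremum is attained (and may be taken over $B$).

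For the lower bound, fix any $r\in\mathcal E$ and apply Proposition~\ref{prop: symmetrisée} to $\mu=\delta_r$. The measure
\[
\mu^{(s)}=\frac{\delta_r+\delta_{T(r)}}{e^\alpha+1}
\]
is admissible and satisfies $\int i\,d\mu^{(s)}=\frac{2}{e^\alpha+1}\,i^{(s)}(r)$. Hence $I^*\ge \frac{2}{e^\alpha+1}\sup_{\mathcal E} i^{(s)}$, and together with the upper bound this gives equality with the supremum over $\mathcal E$.

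Next we compute the explicit formula. Write $a=\int_{F_r^+}s_{\theta_0}p_{\theta_0}\,dx$ and $m=\int_{F_r^+}p_{\theta_0}\,dx$. Since $F^+_{T(r)}=F^-_r$, the centering \eqref{eq: score centered} gives $\int_{F^-_r}s_{\theta_0}p_{\theta_0}=-a$, and $\int_{F^-_r}p_{\theta_0}=1-m$. From \eqref{eq : def i r ctn},
\[
i(T(r))=\frac{(e^\alpha-1)^2a^2}{e^\alpha-(e^\alpha-1)m},
\qquad
i(r)=\frac{(e^\alpha-1)^2a^2}{1+(e^\alpha-1)m}.
\]
The two denominators sum to $e^\alpha+1$, so averaging the two fractions yields \eqref{eq: i(s)r}, with an extra factor $(e^\alpha+1)/2$. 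So I must double-check the normalization convention of $i^{(s)}$ against the stated constant; the computation itself is routine.

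The main obstacle is replacing $\sup_{\mathcal E}$ by $\max_{B}$. For $r\in B$, $i^{(s)}$ is defined through the linear-fractional form $\frac12[i(r)+i(e^\alpha+1-r)]$. Each term is a convex function of $r$, being a (square of a linear functional) divided by a positive linear functional. Hence $i^{(s)}$ is convex on the convex set $B$. Moreover $B$ is weak-$\star$ compact and $i^{(s)}$ is weak-$\star$ continuous there, since it is built from integrals against the $L^1$ functions $s_{\theta_0}p_{\theta_0}$ and $p_{\theta_0}$, using that the score is integrable under DQM. Therefore the maximum over $B$ exists and, by Bauer's maximum principle (a convex u.s.c.\ function on a compact convex set attains its max at an extreme point), it is attained at an extreme point of $B$, i.e.\ in $\mathcal E$. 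Thus $\max_B i^{(s)}=\sup_{\mathcal E} i^{(s)}$, which completes the argument.
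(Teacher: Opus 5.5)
Your proof is correct, but it takes a different route from the paper.

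\textbf{How the paper argues.} It establishes the $\bar\mu$-a.e.\ constancy of $i^{(s)}$ variationally. For $H\in L^\infty(\mathcal E)$ with $\int H\,d\bar\mu=0$, it perturbs $\bar\mu$ to $(1+\epsilon(H+H\circ T))\bar\mu$. This perturbation stays normalized precisely because $\bar\mu=T(\bar\mu)$, and first-order optimality then gives $\int i^{(s)}H\,d\bar\mu=0$ for all such $H$. It then shows, via a Choquet representation of $r_{\max}\in B$ and Jensen's inequality, that $\max_B i^{(s)}$ is attained in $\mathcal E$.

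\textbf{How you argue.} You sandwich $I^*$ between $\bar\mu(\mathcal E)\sup_{\mathcal E} i^{(s)}=\frac{2}{e^\alpha+1}\sup_{\mathcal E} i^{(s)}$ and the values $\frac{2}{e^\alpha+1}i^{(s)}(r)$ of the admissible two-point measures $(\delta_r+\delta_{T(r)})/(e^\alpha+1)$. You read off a.e.\ constancy from the equality case. Bauer's maximum principle replaces the Choquet--Jensen step and packages the same convexity argument.

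\textbf{What each route buys.} Your route is shorter, and it also closes a step the paper glosses over. The paper's perturbations are all absolutely continuous with respect to $\bar\mu$, so Steps 2--3 only yield $i^{(s)}=i^*$ $\bar\mu$-a.e.\ and say nothing about points of $\mathcal E$ outside the support of $\bar\mu$. When Step 4 concludes $i^{(s)}(r_{\max})=i^*$ by calling $i^*$ the value ``constant on $\mathcal E$'', the missing inequality $i^{(s)}(r_{\max})\le i^*$ is exactly your comparison of $\bar\mu$ with the two-point measure built on $r_{\max}$. In exchange, the paper's Steps 2--3 use only optimality of $\bar\mu$ against its own perturbations. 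They would therefore still give a.e.\ constancy for a symmetric measure that is merely locally optimal.

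\textbf{The constant in the explicit formula.} Do not leave this as something to be double-checked. Your computation is right and gives
\[
i^{(s)}(r)=\frac{(e^\alpha-1)^2(e^\alpha+1)}{2}\,\frac{a^2}{\bigl(e^\alpha-(e^\alpha-1)m\bigr)\bigl(1+(e^\alpha-1)m\bigr)},
\]
which differs from the displayed \eqref{eq: i(s)r} by a multiplicative factor $e^\alpha+1$ (not $(e^\alpha+1)/2$). This is the version the paper itself uses in Step 1 of its proof and in the proof of Proposition~\ref{p: decomp fisher sym}. It is also the one consistent with the expansion $i^{(s)}(r)=\alpha^2a^2+O(\alpha^3)$ stated there. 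So \eqref{eq: i(s)r} is missing a factor $e^\alpha+1$. The main identity $\mathcal J^{\max,\alpha}_{\theta_0}=\frac{2}{e^\alpha+1}\max_B i^{(s)}$ is correct as stated, with $i^{(s)}=\frac12(i+i\circ T)$ as in \eqref{eq: i(s) r}.
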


We would like to compare our results on the maximal Fisher information with those previously established in the literature. However, very little is known about the form of the maximal Fisher information when the set $\mathcal{X}$ is infinite, apart from the existence of a solution. This motivates us to compare our explicit findings with results from the discrete setting, especially given that several similarities are immediately apparent.

When comparing the expression above with Theorem~1 in \cite{amorino2025factorization}, multiple parallels can be drawn. In particular, as discussed earlier, the numerator of the function we aim to maximize is always bounded above by
\[
\left( \frac{1}{2} \int_\mathcal{X} |s_{\theta_0}(x)| \, p_{\theta_0}(x) \, dx \right)^2
= \left( \frac{1}{2} \mathbb{E}\bigl[|s_{\theta_0}(X)|\bigr] \right)^2,
\]
with equality holding if and only if \( F_r^+ = F^+_{\max} \) or \( F_r^+ = F^{'+}_{\max} \). This property also appears in the optimization problem for the finite setting.  

An important difference with respect to the discrete case, however, lies in the structure of the index set \( \mathcal{E} \). In the discrete setting, \( \mathcal{E} = \{0, \dots, 2^d - 1\} \) is finite, which makes it possible to explicitly verify that the solution to the optimization problem is supported on only two points, by checking that the support of the optimal sub-probability vector has cardinality between~1 and~\( 2^d \). In contrast, in the continuous setting considered here, we have \( |\mathcal{E}| = \infty \), making such a verification no longer feasible. As a consequence, we cannot rigorously confirm that the candidate identified by maximizing the numerator indeed solves the optimization problem, without a more delicate analysis of the denominator.

\begin{rem}{\label{rem: 3}}
Even without the detailed analysis of the denominator just discussed, the result above highlights that if the optimal solution to the Fisher information maximization problem is symmetric, then an optimal mechanism can always be constructed as a binary mechanism. 
{\modar Indeed, it is established in the proof of Theorem \ref{thm: Fisher sym} that a maximizer of $r \in B\mapsto i^{(s)}(r)$ exists in $\mathcal{E}\subset B$. Taking $\bar{r}\in\mathcal{E}$ such maximizing value, we can consider $\tilde{\mu}=\frac{1}{1+e^{\alpha}} \left(\delta_{\bar{r}}+\delta_{T(\bar{r})}\right)$ which is a symmetric and normalized measure. Then, the Fisher information of $q^{\tilde{\mu}}$ is $\frac{2}{e^\alpha+1} i^{(s)}(\bar{r})$ and thus equal to $\mathcal{J}_{\theta_0}^{\max, \alpha}$.}
\end{rem}

Theorem \ref{thm: Fisher sym} provides a simplified expression for the Fisher information in the case where the solution measure to the optimization problem \eqref{eq : optimisation pb ctn} is symmetric. However, this symmetry assumption does not always hold. In what follows, we present a decomposition of the Fisher information into symmetric and asymmetric components. Before stating the result, let us introduce the notation for the asymmetric parts of the measure 
\[
\mu^{(as)} := \mu - \mu^{(s)}.
\]
With this notation, the following proposition holds (see Section \ref{s: proof main} for the proof).

\begin{prop}\label{p: decomp fisher sym}
Let \( \mu \) be a Radon measure satisfying the normalization condition \eqref{eq : cond norm mu continue}. Then, the Fisher information can be decomposed as
\begin{align*}
\mathcal{I}_{\theta_0}(q^{(\mu)} \circ \mathcal{P}) 
&= \int_{\mathcal{E}} i^{(s)}(r) \, \mu(dr) 
+ \left( \frac{e^\alpha + 1}{2} - \frac{1}{\mu(\mathcal{E})} \right) 2 \mu(\mathcal{E}) \int_{\mathcal{E}} i^{(s)}(r) \, \mu(dr) \\
&\quad -2 (e^\alpha - 1)\mu(\mathcal{E})\int_{\mathcal{E}} i^{(as)}(r) \, \mu(dr),
\end{align*}
where
\[
i^{(as)}(r) := i^{(s)}(r) \left( \int_{F_r^+} p_{\theta_0}(x) \, dx - \frac{1}{2} \right).
\]
The notation \((as)\) stands for "asymmetric", since this quantity satisfies the antisymmetry relation \( i^{(as)}(T(r)) = -i^{(as)}(r) \). Moreover, as \( \alpha \to 0 \), the Fisher information admits the expansion
\[
{\modar \mathcal{I}_{\theta_0}(q^{(\mu)} \circ \mathcal{P}) 
= 
\alpha^2\int_{\mathcal{E}} \left( \int_{F_r^+} s_{\theta_0}(x) p_{\theta_0}(x) \, dx \right)^2 \mu(dr) + O(\alpha^3).}
\]
\end{prop}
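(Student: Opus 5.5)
The plan is to work pointwise on $\mathcal{E}$: express $i(r)$ through $i^{(s)}(r)$ and $i^{(as)}(r)$, integrate against $\mu$, and then use the normalization condition \eqref{eq : cond norm mu continue} to bring the total mass $\mu(\mathcal{E})$ into the coefficients. For $r\in\mathcal{E}$ I will write $a(r):=\int_{F_r^+}s_{\theta_0}p_{\theta_0}\,dx$ and $P(r):=\int_{F_r^+}p_{\theta_0}\,dx$.

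\textbf{Step 1 (effect of $T$).} Since $F^+_{T(r)}=F^-_r$ is the complement of $F_r^+$ up to null sets, the centering of the score \eqref{eq: score centered} gives $a(T(r))=-a(r)$. We also have $P(T(r))=1-P(r)$. Inserting these into \eqref{eq : def i r ctn} yields
\[
i(T(r))=\frac{(e^\alpha-1)^2a(r)^2}{e^\alpha-(e^\alpha-1)P(r)} .
\]
The two denominators $D_r=1+(e^\alpha-1)P(r)$ and $D_{T(r)}=e^\alpha-(e^\alpha-1)P(r)$ add up to $e^\alpha+1$. This recovers \eqref{eq: i(s)r} for $i^{(s)}$.

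\textbf{Step 2 (pointwise decomposition).} From Step 1, $i(r)=i^{(s)}(r)\cdot 2D_{T(r)}/(e^\alpha+1)$. Writing $D_{T(r)}=\tfrac{e^\alpha+1}{2}-(e^\alpha-1)\bigl(P(r)-\tfrac12\bigr)$ gives
\[
i(r)=i^{(s)}(r)-\frac{2(e^\alpha-1)}{e^\alpha+1}\,i^{(as)}(r).
\]
The antisymmetry $i^{(as)}(T(r))=-i^{(as)}(r)$ is immediate from $i^{(s)}\circ T=i^{(s)}$ and $P(T(r))-\tfrac12=-(P(r)-\tfrac12)$. Integrating in $\mu$, which is legitimate because $i$ is bounded on $\mathcal{E}$ and $\mu$ is finite by \eqref{eq: bound mu E}, gives a first form of $\mathcal{I}_{\theta_0}(q^{(\mu)}\circ\mathcal{P})=\int_\mathcal{E} i\,d\mu$.

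\textbf{Step 3 (matching the stated coefficients).} I integrate \eqref{eq : cond norm mu continue} against $p_{\theta_0}(x)\,dx$ and use Fubini to obtain $\mu(\mathcal{E})+(e^\alpha-1)\int_\mathcal{E} P\,d\mu=1$. This relation, together with the splitting $\mu=\mu^{(s)}+\mu^{(as)}$ and Proposition~\ref{prop: symmetrisée} (which gives $\int i^{(as)}\,d\mu^{(s)}=0$ by antisymmetry, and fixes the mass of $\mu^{(s)}$), is what converts the factor $\tfrac{2}{e^\alpha+1}$ into the stated combination of $\tfrac{e^\alpha+1}{2}-\tfrac1{\mu(\mathcal{E})}$ and $\mu(\mathcal{E})$. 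Concretely, I will compute $\int i^{(s)}d\mu$ and $\int i^{(as)}d\mu$ separately on $\mu^{(s)}$ and $\mu^{(as)}$, and use $\mu(\mathcal{E})=\frac{2}{e^\alpha+1}+\mu^{(as)}(\mathcal{E})$ to reorganize.

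\textbf{Main obstacle.} I expect Step 3 to be the main obstacle. The coefficients in the statement are not the naive ones from Step 2: for symmetric $\mu$ they should reduce to Theorem~\ref{thm: Fisher sym}, but my bookkeeping gives a factor $2$ discrepancy in the $i^{(as)}$ term. So I would first pin down the normalization convention for $i^{(s)}$, including the factor $\tfrac12$ in \eqref{eq: i(s) r}, and then check the identity on a symmetric two-point measure before writing the general reorganization.

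\textbf{Step 4 (small-$\alpha$ expansion).} For the expansion I use $|a(r)|\le\frac12\mathbb{E}|s_{\theta_0}(X)|$ and $P(r)\in[0,1]$. These give, uniformly in $r$,
\[
i(r)=\frac{(e^\alpha-1)^2a(r)^2}{1+(e^\alpha-1)P(r)}=\alpha^2a(r)^2+O(\alpha^3).
\]
Since $\mu(\mathcal{E})\le1$ by \eqref{eq: bound mu E}, integrating this uniform bound against $\mu$ gives the stated expansion directly, and this step does not depend on the decomposition.
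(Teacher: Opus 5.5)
Your Steps 1, 2 and 4 are correct. The pointwise identity
\[
i(r)=i^{(s)}(r)-\frac{2(e^\alpha-1)}{e^\alpha+1}\,i^{(as)}(r)
\]
holds, so $\mathcal{I}_{\theta_0}(q^{(\mu)}\circ\mathcal{P})=\int i^{(s)}\,d\mu-\frac{2(e^\alpha-1)}{e^\alpha+1}\int i^{(as)}\,d\mu$ for every normalized $\mu$. The antisymmetry argument is right. Step 4 proves the small-$\alpha$ expansion directly and uniformly in $r$, which is cleaner than the paper, where it is read off the decomposition.

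One small correction: Step 1 actually gives $i^{(s)}(r)=\frac{(e^\alpha+1)(e^\alpha-1)^2}{2}\,\frac{a(r)^2}{D_rD_{T(r)}}$. The printed \eqref{eq: i(s)r} drops the factor $e^\alpha+1$; this is a typo, as Step 1 of the proof of Theorem~\ref{thm: Fisher sym} shows.

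\textbf{The gap: Step 3 cannot be completed.} The displayed decomposition is false whenever $\mu(\mathcal{E})\neq\frac{2}{e^\alpha+1}$, and no convention for the factor $\frac12$ in $i^{(s)}$ rescues it. Here is a counterexample.

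Take any $r\in\mathcal{E}$ with $i^{(s)}(r)>0$, any $0<a<\frac{1}{e^\alpha+1}$, and set $\mu=a(\delta_r+\delta_{T(r)})+(1-a(e^\alpha+1))\delta_{\mathbf{1}}$, where $\mathbf{1}$ is the constant function $1$.
\begin{itemize}
\item $\mu$ is normalized, because $r(x)+T(r)(x)=e^\alpha+1$ at every $x$.
\item $\mu(\mathcal{E})=1-a(e^\alpha-1)>\frac{2}{e^\alpha+1}$.
\item $i(\mathbf{1})=i^{(s)}(\mathbf{1})=i^{(as)}(\mathbf{1})=0$, since the score is centered.
\item $\int i^{(as)}\,d\mu=0$ by antisymmetry.
\end{itemize}
Hence the true Fisher information is $\int i\,d\mu=2a\,i^{(s)}(r)=\int i^{(s)}\,d\mu$. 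The printed right-hand side instead equals $\bigl((e^\alpha+1)\mu(\mathcal{E})-1\bigr)\int i^{(s)}\,d\mu$, which is strictly larger. For instance, with $e^\alpha=3$ and $a=\frac18$ it is exactly twice the Fisher information.

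Two of your planned tools cannot detect or fix this.
\begin{itemize}
\item The check on a symmetric two-point measure would not reveal the error. There $\mu(\mathcal{E})=\frac{2}{e^\alpha+1}$ and the $i^{(as)}$ term vanishes, so both formulas agree.
\item The relations you list are $\mu(\mathcal{E})+(e^\alpha-1)\int P\,d\mu=1$, $\mu^{(s)}(\mathcal{E})=\frac{2}{e^\alpha+1}$ and $\int i^{(as)}\,d\mu^{(s)}=0$. They are unweighted mass identities, whereas the discrepancy is weighted by $i^{(s)}$. So they cannot convert one formula into the other.
\end{itemize}

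\textbf{Where the paper's proof goes wrong.} The paper follows exactly the $\mu=\mu^{(s)}+\mu^{(as)}$ route of your Step 3, but makes two slips.
\begin{itemize}
\item It replaces $\int i\,d\mu^{(s)}$ by $\int i^{(s)}\,d\mu$. The correct value is $\frac{2}{(e^\alpha+1)\mu(\mathcal{E})}\int i^{(s)}\,d\mu$.
\item In the asymmetric part it writes the prefactor $2\mu(\mathcal{E})$, where its own computation gives $\frac{2}{e^\alpha+1}$. It also pulls the $r$-dependent factor $\int_{F_r^+}p_{\theta_0}-\frac12$ outside the integral.
\end{itemize}
These slips cancel exactly when $\mu(\mathcal{E})=\frac{2}{e^\alpha+1}$, which is why Remark~\ref{rk: retrieve sym Fisher} looks consistent.

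Done correctly, that route returns your Step 2 formula. An equivalent form that stays close to the printed one is
\[
\mathcal{I}_{\theta_0}(q^{(\mu)}\circ\mathcal{P})=\int i^{(s)}\,d\mu^{(s)}+\Bigl(\frac{e^\alpha+1}{2}-\frac{1}{\mu(\mathcal{E})}\Bigr)\mu(\mathcal{E})\int i^{(s)}\,d\mu^{(s)}-(e^\alpha-1)\,\mu^{(s)}(\mathcal{E})\int i^{(as)}\,d\mu .
\]
Both correction coefficients here are still $O(\alpha)$, so the conclusion of Remark~\ref{rk: sym fisher alpha small} survives.

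You should drop Step 3 and instead state and prove the corrected identity from Step 2, together with the antisymmetry and Step 4.
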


\begin{rem}{\label{rk: retrieve sym Fisher}}
As expected, if the solution measure is symmetric, we recover the result of Theorem \ref{thm: Fisher sym}, which corresponds precisely to the first term in the decomposition above. Indeed, in this case, we have \( \mu(\mathcal{E}) = \frac{2}{e^\alpha + 1} \) by Point 2 of Proposition \ref{prop: symmetrisée}, and the same proposition guarantees that \( \mu = \mu^{(s)} \). Moreover, since \( i^{(as)}(T(r)) = -i^{(as)}(r) \) and by the definition \eqref{eq: mu(s)} of the symmetrized measure \( \mu^{(s)} \), we obtain:
\[
\int_{\mathcal{E}} i^{(as)}(r) \, \mu^{(s)}(dr) = \int_{\mathcal{E}} \frac{1}{2} \left[ i^{(as)}(r) + i^{(as)}(T(r)) \right] \mu(dr) = 0.
\]
This shows that both the second and third terms in the decomposition vanish when the measure is symmetric.
\end{rem}

\begin{rem}\label{rk: sym fisher alpha small}
In scenarios where the data are highly sensitive and thus a high level of privacy is required — corresponding to the regime \( \alpha \to 0 \) — the asymmetric contribution to the Fisher information becomes negligible compared to the symmetric one. Indeed, the leading term in the decomposition, corresponding to the symmetric contribution, is of order \( \alpha^2 \), as seen in the asymptotic expansion. The remaining two terms, which capture the asymmetric contribution, also involve integrals of order \( \alpha^2 \), but are multiplied by prefactors of order \( \alpha \), namely \( \left( \frac{e^\alpha + 1}{2} - \frac{1}{\mu(\mathcal{E})} \right) \) and \( e^\alpha - 1 \). As a result, their overall contribution becomes of order \( \alpha^3 \), and is therefore negligible in the limit \( \alpha \to 0 \). 
\end{rem}
This final observation motivates our focus on symmetric measures. In the high-privacy regime, which is the most relevant from an applied perspective, the Fisher information is dominated by its symmetric component, while the contribution of the asymmetric part vanishes asymptotically. 

This behavior no longer holds in the low-privacy regime, corresponding to large values of \( \alpha \). As detailed in the next section, which is devoted to the analysis of asymmetric privacy mechanisms, asymmetry plays a non-negligible role.

{\subsection{Asymmetric privacy mechanisms}{\label{ss:asymm}}

For large values of $\alpha$, neglecting the asymmetric part of the measure to focus solely on the symmetric component no longer appears to be an appropriate approach. To emphasize this, we now propose to analyze a purely asymmetric case, corresponding to a modified version of the staircase mechanism which presents peaks centered around \( x \).
We assume that $\mathcal{X}=\mathbb{R}$ and construct the mechanism as follows.
First, we {\modar consider} two increasing functions $g : \mathbb{R} \to \{-\infty\} \cup \mathbb{R}$ and $d:  \mathbb{R} \to \mathbb{R} \cup \{\infty\}$ such that $g(x) < d(x)$. We define the mapping 
\begin{equation} 
	\label{eq : def I general}
	I: \left\{
	\begin{array}{llll} 
		&\mathcal{X} &\to& \mathcal{E} \\
&x_0 &\mapsto &I(x_0) = r_{x_0},
	\end{array}
	\right. 
\end{equation} 
where the function $r_{x_0}$ is the element of $\mathcal{E}$ defined by
\begin{equation}\label{eq : def r x0}
	r_{x_0} : \left\{
	\begin{array}{llll}
	&\mathbb{R} &\to & \{1,e^\alpha\}
	\\
	&x&\mapsto &r_{x_0}(x) = 1 + (e^\alpha - 1) \indi{[g(x_0), d(x_0)]}(x).
	\end{array}
		\right. 
\end{equation}
Note that the functions $r_{x_0}$ take the constant value \( 1 \) across the entire space $\mathcal{X}$, except in a the interval of $\mathbb{R}$ defined by $ [{g}(x_0), {d}(x_0)]$, where their value rises to $ e^\alpha $. Remark that when  an edge of the interval is not finite, $g(x_0)=-\infty$ or $d(x_0)=\infty$, the interval should be considered opened at this edge.   
Let $\nu$ be a probability measure on $\mathcal{X}$ that is absolutely continuous with respect to the Lebesgue measure, i.e. it admits a density: $\nu(dx) = \nu(x)\, dx$. Define $\mu^{\modar0}$ on $\mathcal{E}$ as the pushforward of $\nu$ through the map $I$, that is, $\mu^{\modar0} = I_\# \nu$. Hence, for any measurable function \( f: \mathcal{E} \to \mathbb{R} \),
\begin{equation}\label{eq: mu kernel}
	\int_{\mathcal{E}} f(r)\, \mu^{\modar0}(dr) = \int_\mathcal{X} f\big( I(x_0) \big)\, \nu(x_0)\, dx_0.
\end{equation}
Observe that, in general, $\mu^{\modar0}$ does not satisfy the normalization condition \eqref{eq : cond norm mu continue}. However, in some cases, it is possible to rescale it and thereby introduce a new probability measure satisfying it. 

Let us explain below which conditions on $g$ and $d$ are sufficient in order to rescale the measure $\mu^{\modar0}$ into a normalized measure according to definition
\eqref{eq : cond norm mu continue}.

First, we assume that $g$ is left continuous while $d$ is right continuous on $\mathbb{R}$ and that $g$ takes the value $-\infty$ on some interval $(-\infty,x^*_g]$
and that  
$d$ takes the value $\infty$ on some interval $[x^*_d,\infty)$.
We define the inverses of the functions $g$ and $d$ as follows:
\begin{equation*}
	g^{-1}(x)=\sup \{ y \in \mathbb{R} \mid g(y) \le x \}, 
	\quad
	d^{-1}(x)=\inf \{ y \in \mathbb{R} \mid d(y) \ge x \}. 
\end{equation*}
With these definitions, and using left (resp.\ right) continuity of $g$ (resp.\ $d$), we have for all $(x,x_0) \in \mathbb{R}^2$
\begin{equation}\label{eq: inverse function l r}
	\mathbbm{1}_{\{g(x_0) \le x \le d(x_0)\}} 
	= \mathbbm{1}_{\{d^{-1}(x) \le x_0 \le g^{-1}(x)\}}.
\end{equation}
\begin{lemma}\label{l : first step staircase}
	Let $c \in (0,1)$ and assume that 
    \begin{equation}{\label{eq: cond 28.5}}
    \int_{d^{-1}(x)}^{g^{-1}(x)} \nu(x_0)dx_0=c
    \end{equation}
    for all $x \in \mathbb{R}$.
	Define 
	\begin{equation*}{\modar \mu} := \frac{\mu^{\modar0}}{1 + (e^\alpha - 1)c}.
	\end{equation*}
	 Then, the measure ${\modar \mu}$ satisfies the condition \eqref{eq : cond norm mu continue}. 
	\end{lemma}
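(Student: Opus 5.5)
The plan is a direct computation of $\int_{\mathcal{E}} e_x(r)\,\mu(dr)$ using the pushforward formula \eqref{eq: mu kernel}. Fix $x\in\mathbb{R}$. The map $r\mapsto e_x(r)$ is measurable on $\mathcal{E}$, so \eqref{eq: mu kernel} applies with $f=e_x$ and gives
\[
\int_{\mathcal{E}} e_x(r)\,\mu^{0}(dr)=\int_{\mathbb{R}} e_x\bigl(I(x_0)\bigr)\,\nu(x_0)\,dx_0 .
\]
The first point to settle is that $e_x(r_{x_0})=r_{x_0}(x)$ holds for almost every $x$, jointly in $(x,x_0)$. The paper only asserts $e_x(r)=r(x)$ for a.e. $x$ with $r$ fixed. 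I will use joint measurability of $(x,r)\mapsto e_x(r)$ together with measurability of $x_0\mapsto I(x_0)$ to apply Fubini on $\mathbb{R}\times\mathbb{R}$ with the measure $dx\otimes\nu(x_0)dx_0$. The set where $e_x(I(x_0))\neq r_{x_0}(x)$ then has null $x$-sections for every $x_0$, hence is null, hence has null $x_0$-sections for a.e. $x$. Consequently, for a.e. $x$ the integrand may be replaced by $r_{x_0}(x)=1+(e^\alpha-1)\mathbbm{1}_{\{g(x_0)\le x\le d(x_0)\}}$.

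Next, \eqref{eq: inverse function l r} turns the indicator in $x_0$ into $\mathbbm{1}_{\{d^{-1}(x)\le x_0\le g^{-1}(x)\}}$. Since $\nu$ is a probability density, this gives
\[
\int_{\mathcal{E}} e_x(r)\,\mu^{0}(dr)=\int_{\mathbb{R}}\nu(x_0)\,dx_0+(e^\alpha-1)\int_{d^{-1}(x)}^{g^{-1}(x)}\nu(x_0)\,dx_0 = 1+(e^\alpha-1)c,
\]
where the last equality is hypothesis \eqref{eq: cond 28.5}. Dividing by the constant $1+(e^\alpha-1)c>0$ yields \eqref{eq : cond norm mu continue} for $\mu$. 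Positivity and the Radon property carry over from $\mu^0$, since $\mu^0$ is a finite pushforward measure of total mass $1$.

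The only step needing care is this identification of $e_x(I(x_0))$ with $r_{x_0}(x)$ off a null set, which the Fubini argument above handles; measurability of $I$ into $(B,\text{weak-}\star)$ follows from continuity of $x_0\mapsto\int r_{x_0}\varphi$ for $\varphi\in L^1$, given the monotonicity and semicontinuity of $g,d$. A smaller point is checking that the infinite endpoints in \eqref{eq: inverse function l r} cause no trouble, but these affect only a Lebesgue-null set of $x_0$ because $\nu$ has a density.
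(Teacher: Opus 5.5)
Your proposal is correct and follows the same route as the paper's proof: the pushforward formula \eqref{eq: mu kernel}, the evaluation $e_x(I(x_0))=r_{x_0}(x)$, the inverse identity \eqref{eq: inverse function l r}, and hypothesis \eqref{eq: cond 28.5}. Your Fubini step only makes rigorous the a.e.\ replacement of $e_x(I(x_0))$ by $r_{x_0}(x)$, which the paper does tacitly, and one side remark needs correcting: $x_0\mapsto\int r_{x_0}\varphi$ need not be continuous (e.g.\ $g_c$ jumps from $-\infty$ to $\Xi^{-1}(c/2)$ at $\Xi^{-1}(c)$), but it is Borel because $g$ and $d$ are monotone, and that is all the measurability of $I$ requires.
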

	\begin{proof} For $x \in \mathbb{R}$, we write
		\begin{align*}
			\int_{\mathcal{E}} e_{x}(r)\, \mu^{\modar0}(dr) 
			&= \int_\mathcal{X} e_{x}\big( I(x_0) \big)\, \nu(x_0)\, dx_0,
\quad \text{having used \eqref{eq: mu kernel},}			
			 \\
			 &= \int_\mathcal{X} r_{x_0}(x) \, \nu(x_0)\, dx_0,
			 \quad \text{from the definition of $I$,}			
			 \\
			&= \int_\mathcal{X} \big[ 1 + (e^\alpha - 1)\, 
			\mathbbm{1}_{[g(x_0), d(x_0)]}(x) \big] \nu(x_0)\, dx_0
			,\quad \text{where we used \eqref{eq : def r x0},}		
			 \\
			&= 1 + (e^\alpha - 1) \int_\mathcal{X} 
			\mathbbm{1}_{ [g(x_0), d(x_0)]}(x)\, \nu(x_0)\, dx_0.
		\end{align*}
	From \eqref{eq: inverse function l r}, 
	and  $\int_{d^{-1}(x)}^{g^{-1}(x)} \nu(x_0)dx_0=c$,  we deduce
	\begin{equation*}
		\int_{\mathcal{E}} e_{x}(r)\, \mu^{\modar0}(dr)  
		=  1 + (e^\alpha - 1) \int_\mathcal{X} 
		\mathbbm{1}_{[ d^{-1}(x), g^{-1}(x)]}(x_0) \nu(x_0) \, dx_0
		= 1   + (e^\alpha - 1)c.
	\end{equation*}	
	This show that {\modar $\mu=\mu^0/(1   + (e^\alpha - 1)c)$} satisfies the renormalization condition \eqref{eq : cond norm mu continue}.
	\end{proof}
We now propose a construction of the two functions $g$ and $d$ such that Lemma \ref{l : first step staircase} applies.

Assume that $\nu(x)>0$ for all $x \in \mathbb{R}$ and set $\Xi(x)=\int_{-\infty}^x \nu(y)dy$. We fix $c\in (0,1)$. We define the functions $g_c$ and $d_c$ as follows
\begin{equation}\label{eq : def lc}
	g_c(x)=
	\begin{cases}
		-\infty & \text{if $x \le x^*_{g,c}:=\Xi^{-1}(c)$}
		\\
		\Xi^{-1}(-\frac{c}{2}+\Xi(x)) & \text{if $x >x^*_{g,c}$} 
	\end{cases}
\end{equation}
\begin{equation}\label{eq : def rc}
	d_c(x)=
	\begin{cases}
		\Xi^{-1}(\frac{c}{2}+\Xi(x)) & \text{if $x < x^{*}_{d,c}:=\Xi^{-1}(1-c)$}
\\			\infty & \text{if $x \ge x^{*}_{d,c}$} 
	\end{cases}
\end{equation}
These functions are increasing and with $g_c (x) < d_c(x)$ for all $x$, as $\Xi^{-1}$ is strictly increasing on $(0,1)$ and $c>0$. By definition $g_c$ is left continuous and $d_c$ is right continuous. The next lemma states that these functions satisfy the constraint in \eqref{eq: cond 28.5}. Its proof can be found in Section \ref{s: proof main}

\begin{lemma}\label{l : second step staircase}
We have for all $x\in \mathbb{R}$,
\begin{equation*}
 \int_{d_c^{-1}(x)}^{g_c^{-1}(x)}\nu(x_0) \, dx_0 = c.
\end{equation*}
\end{lemma}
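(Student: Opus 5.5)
The plan is to compute the two generalized inverses $g_c^{-1}$ and $d_c^{-1}$ explicitly in terms of $\Xi$ and $\Xi^{-1}$, and then evaluate the integral as $\Xi(g_c^{-1}(x))-\Xi(d_c^{-1}(x))$. Since $\nu>0$ everywhere, $\Xi$ is a continuous strictly increasing bijection from $\mathbb{R}$ onto $(0,1)$. We use $\Xi^{-1}(u)=-\infty$ for $u\le 0$ and $\Xi^{-1}(u)=+\infty$ for $u\ge 1$. With these conventions, on the finite range $g_c(y)=\Xi^{-1}(\Xi(y)-c/2)$ and $d_c(y)=\Xi^{-1}(\Xi(y)+c/2)$.

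\textbf{Computing $g_c^{-1}$.} Fix $x\in\mathbb{R}$. For $y\le x^*_{g,c}$ we have $g_c(y)=-\infty\le x$, so these $y$ always belong to $\{y: g_c(y)\le x\}$. For $y>x^*_{g,c}$ we have $\Xi(y)>c$, and $g_c(y)\le x$ is equivalent to $\Xi(y)\le \Xi(x)+c/2$. Hence
\[
g_c^{-1}(x)=\max\bigl(x^*_{g,c},\,\Xi^{-1}(\Xi(x)+c/2)\bigr).
\]
The right-hand side equals $+\infty$ when $\Xi(x)+c/2\ge1$. In every case this gives
\[
\Xi(g_c^{-1}(x))=\max\bigl(c,\min(1,\Xi(x)+c/2)\bigr).
\]

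\textbf{Computing $d_c^{-1}$.} Symmetrically, every $y\ge x^*_{d,c}$ lies in $\{y: d_c(y)\ge x\}$. For $y<x^*_{d,c}$, the condition $d_c(y)\ge x$ is equivalent to $\Xi(y)\ge \Xi(x)-c/2$. Therefore
\[
\Xi(d_c^{-1}(x))=\min\bigl(1-c,\max(0,\Xi(x)-c/2)\bigr).
\]

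\textbf{Case analysis.} Set $u=\Xi(x)\in(0,1)$ and $A(u)=\max(c,\min(1,u+c/2))-\min(1-c,\max(0,u-c/2))$; the goal is $A(u)=c$. I will split according to where $u$ lies relative to $c/2$, $c$, $1-c/2$ and $1-c$.
\begin{itemize}
\item In the middle regime, where $c/2\le u\le 1-c/2$ and both truncations in the inner $\min$/$\max$ are inactive, the difference is $(u+c/2)-(u-c/2)=c$, provided the outer $\max(c,\cdot)$ and $\min(1-c,\cdot)$ are also inactive.
\item In the boundary regimes the outer truncations become active. For instance, if $u<c/2$, the first term is $\max(c,u+c/2)=c$ and the second is $\min(1-c,0)=0$, giving $A(u)=c$.
\end{itemize}

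\textbf{Main obstacle.} The hard step is the bookkeeping of the thresholds. In particular, I must check that $\max(c,\cdot)$ and $\min(1-c,\cdot)$ are active exactly in complementary regions, so that the difference stays equal to $c$ in every case. When $c>1/2$ the ordering of $c/2$, $c$, $1-c$ and $1-c/2$ changes, and this needs separate care; it may force both outer truncations to be active at once.

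If the case analysis fails in some regime, I will first recheck that the definitions of $x^*_{g,c}$ and $x^*_{d,c}$ are the ones that make the truncations match. I will also double-check the endpoint conventions, using the left continuity of $g_c$ and the right continuity of $d_c$ to justify that the sup and inf are attained as claimed. This verification is carried out by monotonicity of $\Xi$ alone and needs no further results from the paper.
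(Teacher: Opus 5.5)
Your proposal is correct and follows the paper's proof: your formulas $\Xi(g_c^{-1}(x))=\max(c,\min(1,u+c/2))$ and $\Xi(d_c^{-1}(x))=\min(1-c,\max(0,u-c/2))$ are a compact encoding of the paper's three-piece expressions for $g_c^{-1}$ and $d_c^{-1}$ on the regions $x\le \Xi^{-1}(c/2)$, $\Xi^{-1}(c/2)<x<\Xi^{-1}(1-c/2)$ and $x\ge \Xi^{-1}(1-c/2)$. The bookkeeping you flag as the main obstacle is not one: the outer $\max(c,\cdot)$ binds only when $u\le c/2$ and the outer $\min(1-c,\cdot)$ only when $u\ge 1-c/2$, and these regions are disjoint for every $c\in(0,1)$, so the thresholds $c$ and $1-c$ never enter, no separate treatment of $c>1/2$ is needed, and the three regimes give $c-0$, $(u+c/2)-(u-c/2)$ and $1-(1-c)$, all equal to $c$.
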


%
%
%

%
%
%
%
%

Now, we consider the measure $\mu^{\modar0}$ defined through $\mu^{\modar0} = I_\# {\modar \nu}$,
where we recall the definition of the mapping $I$ in \eqref{eq : def I general}--\eqref{eq : def r x0}, with the choice of function $g$ and $d$ given 
by \eqref{eq : def lc}--\eqref{eq : def rc}.
Using Lemmas \ref{l : first step staircase}--\ref{l : second step staircase} 
the measure \( \mu^{\modar0} \) 
can be rescaled to define the measure on $\mathcal{E}$
\begin{equation} \label{eq : mu_0 mu}
{\modar \mu} := \frac{\mu^{\modar0}}{1 + (e^\alpha - 1)c},
\end{equation}
which satisfies the normalization condition \( \int_\mathcal{E} e_{x_0}(r)\, {\modar \mu}(dr) = 1 \).  
The associated privacy mechanism
\[
q^{({\modar \mu})}(x,dr) := e_{\modar x}(r)\, {\modar \mu}(dr)
\]
can thus be rewritten as
\begin{equation}\label{eq: q kernel}
q^{(\mu)}(x,dr)
	= e_{\modar x}(r)\, \frac{\mu^{\modar0}(dr)}{1 + (e^\alpha - 1)c }.
\end{equation}

The following proposition shows that this 
construction yields an asymptotically efficient mechanism in the regime where \( \alpha \to \infty \) and \( c \to 0 \), up to ask some regularity on the model and on $\nu_0$.

\begin{ass} \label{Ass : smoothness for staircase} 
	We assume that statistical model \( (P_{\theta})_{\theta \in \Theta} \) is differentiable in quadratic mean (DQM) {\modar at \( \theta_0 \).} 
	 Moreover we assume that $x \mapsto {p}_{\theta_0}(x)$ and $x \mapsto s_{\theta_0}(x)$ are continuous at almost every point $x\in\mathbb{R}$. We assume that {\modar $\nu$} is a continuous function. 
\end{ass}

\begin{prop}\label{prop: kernel}
	Suppose that Assumption \ref{Ass : smoothness for staircase} holds true.
	Then, for the privacy mechanism \( q^{({\modar \mu})} \) defined in \eqref{eq: q kernel}, we have
	\[
	\mathcal{I}_{\theta_0}\big(q^{({\modar \mu})} \circ \mathcal{P}\big)
\longrightarrow
	\int_\mathcal{X} \frac{\dot{p}_{\theta_0}(x)^2}{p_{\theta_0}(x)}\, dx
	\qquad \text{as } c \to 0, \, c e^\alpha \to \infty.
	\]
\end{prop}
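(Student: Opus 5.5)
Our plan is to write the Fisher information of $q^{(\mu)}$ explicitly through the pushforward \eqref{eq: mu kernel} and then pass to the limit by dominated convergence (or Fatou plus an upper bound).

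\textbf{Step 1: explicit formula.} By \eqref{eq : Fisher ctn general_preli}, \eqref{eq : def i r ctn} and \eqref{eq: mu kernel}, with $J(x_0):=[g_c(x_0),d_c(x_0)]$ we get
\[
\mathcal{I}_{\theta_0}\big(q^{(\mu)}\circ\mathcal{P}\big)=\frac{(e^\alpha-1)^2}{1+(e^\alpha-1)c}\int_{\mathbb{R}}\frac{\big(\int_{J(x_0)} \dot p_{\theta_0}(x)\,dx\big)^2}{1+(e^\alpha-1)P_{\theta_0}(J(x_0))}\,\nu(x_0)\,dx_0,
\]
where $\dot p_{\theta_0}=s_{\theta_0}p_{\theta_0}$. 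Away from the tails ($c/2<\Xi(x_0)<1-c/2$), $\nu(J(x_0))=c$ by \eqref{eq : def lc}--\eqref{eq : def rc}. As $c\to0$ the interval $J(x_0)$ shrinks to $x_0$, and its length is $\ell(x_0)\approx c/\nu(x_0)$ because $\nu$ is continuous and positive.

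\textbf{Step 2: pointwise limit.} At a continuity point $x_0$ of $p_{\theta_0}$ and $s_{\theta_0}$, and hence almost everywhere by Assumption~\ref{Ass : smoothness for staircase}, we have $\int_J\dot p_{\theta_0}\approx \ell\,\dot p_{\theta_0}(x_0)$ and $P_{\theta_0}(J)\approx \ell\,p_{\theta_0}(x_0)$. Since $\ell\sim c/\nu(x_0)$ and $ce^\alpha\to\infty$, the denominator satisfies $1+(e^\alpha-1)P_{\theta_0}(J)\sim e^\alpha c\,p_{\theta_0}(x_0)/\nu(x_0)$, and the prefactor is $\sim e^\alpha/c$. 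The integrand therefore behaves like
\[
\frac{e^{2\alpha}}{ce^{\alpha}}\cdot\frac{c^2\dot p_{\theta_0}(x_0)^2/\nu(x_0)^2}{e^\alpha c\,p_{\theta_0}(x_0)/\nu(x_0)}\,\nu(x_0)=\frac{\dot p_{\theta_0}(x_0)^2}{p_{\theta_0}(x_0)}.
\]
This is exactly the target density. Fatou's lemma then gives $\liminf \mathcal{I}\ge\int \dot p_{\theta_0}^2/p_{\theta_0}$.

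\textbf{Step 3: upper bound, which we expect to be the main obstacle.} Rather than seeking a dominating function, we would use the data-processing inequality: the Fisher information of a privatized model never exceeds that of the original DQM model. This follows from \eqref{eq : score public} and Jensen, since $t_{\theta_0}(Z)=\mathbb{E}[s_{\theta_0}(X)\mid Z]$ gives $\mathbb{E}[t^2]\le\mathbb{E}[s^2]$. Hence $\limsup\mathcal{I}\le \int\dot p_{\theta_0}^2/p_{\theta_0}$, and combined with Step 2 this yields the limit.

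\textbf{Remaining details.} We still need to justify Fatou on the whole line. The tail regions $\Xi(x_0)<c/2$ and $\Xi(x_0)>1-c/2$ have $\nu$-mass at most $c$. For each fixed $x_0$ they are eventually excluded as $c\to0$, so the pointwise limit is unaffected, and the integrand is nonnegative. We also need the continuity points to yield a genuine limit when $J$ is not centered at $x_0$ but is contained in a shrinking neighbourhood of it. This holds because $\Xi^{-1}$ is continuous, so both endpoints of $J(x_0)$ converge to $x_0$, and the ratio $\ell/c\to1/\nu(x_0)$ follows from the mean value theorem applied to $\Xi$ together with the continuity of $\nu$.
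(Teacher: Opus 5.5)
Your proof is correct, and it takes a genuinely different route from the paper's.

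The paper splits the $x_0$-integral into three parts. The tail pieces $\{\Xi(x_0)\le c\}$ and $\{\Xi(x_0)\ge 1-c\}$ are shown to vanish via the Cauchy--Schwarz bound $i(r_{x_0})\le (e^\alpha-1)\int_{J(x_0)}\dot p_{\theta_0}^2/p_{\theta_0}$. For the central piece, the paper changes variables to $u_0=\Xi(x_0)$ and proves a.e.\ convergence of the integrand $\hat i_{c,\alpha}$. It then needs uniform integrability, since no integrable dominating function is available. It gets this by bounding $\hat i_{c,\alpha}$ by the local averages $M_c$ of $k(u)=\dot p_{\theta_0}(\Xi^{-1}(u))^2/\big(p_{\theta_0}(\Xi^{-1}(u))\,\nu(\Xi^{-1}(u))\big)$ and invoking the weak-type $(1,1)$ Hardy--Littlewood maximal inequality.

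You replace all of this with a Scheff\'e-type sandwich. Fatou gives the $\liminf$ bound from the pointwise limit. The data-processing inequality $\mathbb{E}[t_{\theta_0}(Z)^2]\le\mathbb{E}[s_{\theta_0}(X)^2]$ gives the matching $\limsup$ bound for every $(c,\alpha)$. In this setting that inequality is just $i(r)\le\int s_{\theta_0}^2\,r\,p_{\theta_0}\,dx$ (Cauchy--Schwarz), integrated against $\mu$ using the normalization \eqref{eq : cond norm mu continue}.

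Your route is shorter and needs neither a separate tail estimate nor the maximal function. However, it works only because the claimed limit coincides with the a priori upper bound $\mathcal{I}_{\theta_0}(\mathcal{P})$. The paper's uniform-integrability lemma holds uniformly over $c\in(0,1/2)$ and $\alpha>0$. Its argument would therefore still identify the limit in regimes where that limit is strictly smaller, for instance $c\to0$ with $ce^\alpha\to\kappa\in(0,\infty)$. There your sandwich does not close.

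Two minor points. First, $\nu(J(x_0))=c$ holds for $c<\Xi(x_0)<1-c$, not for $c/2<\Xi(x_0)<1-c/2$; when $\Xi(x_0)\le c$ the left endpoint is $-\infty$. This is harmless, since for fixed $x_0$ you only use the two-sided form of $J(x_0)$ once $c<\min(\Xi(x_0),1-\Xi(x_0))$. Second, at points with $p_{\theta_0}(x_0)=0$ the target density $s_{\theta_0}^2p_{\theta_0}$ vanishes, so the Fatou step is trivially fine there.
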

\begin{rem}
	\begin{enumerate}
		\item The privacy mechanism \eqref{eq: q kernel} is  
		asymmetric, and typically ${\modar\mu}$ and $T({\modar \mu})$ have disjoint support.
		\item This randomization procedure encodes the information on the 
		private data $x_0$ through the construction of the function $r_{x_0}$,
		 {\modar which exhibits} 
		 a peak centered around \( x_0 \) which shifts with \( x_0 \).
		 \item  In a simpler context where $\mathcal{X}=\mathbb{R}/\mathbb{Z} \sim [0,1)$ is the torus and {\modar $\nu$} is uniform on $[0,1)$, we can choose $g(x)=x-c/2$ and $d(x)=x+c/2$ and define $\mu^{\modar 0}=I_\sharp({\modar \nu})$,  considering all functions on $\mathcal{X}$ as $1$-periodic. 		 
		 Then, it is possible, along the lines of the proof of Lemma \ref{l : first step staircase}, to check that the measure $\mu^0$ defined in \eqref{eq : mu_0 mu} satisfies 
		 \eqref{eq : cond norm mu continue}. Actually, the construction above extends this example to the real line $\mathcal{X}=\mathbb{R}$ and allows the choice of a general probability measure $\nu_0$.
	\end{enumerate}
\end{rem}
}
\subsection{Implementation of the method: from theory to practice}
\label{ss: Implementation}

We now turn to the practical application of the privacy mechanisms introduced in Section \ref{ss:asymm}. While the theoretical construction of the measure $q^{(\mu)}(x,dr)$ in \eqref{eq: def mechanism measure} may appear abstract, we show here that it yields a concrete, implementable algorithm.

Our strategy relies on transforming the abstract output space $\mathcal{E}$ into a manageable, finite-dimensional representation. We define the image of our mapping as $\widehat{\mathcal{E}}=\{I(x_0) \mid x_0\in \mathbb{R}\} \subset \mathcal{E}$.

A central feature of our construction is the parameter $c$, which acts as a tuning knob for the mechanism's behavior. 

First, we ensure that our mapping preserves the data structure.
\begin{lemma}
	For any privacy level $\alpha>0$, if we choose the tuning parameter $c \le 1/2$, the map $I : \mathcal{X} \to \widehat{\mathcal{E}}$ is one-to-one.
\end{lemma}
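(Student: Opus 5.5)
To prove injectivity of $I$ on $\widehat{\mathcal{E}}$, we take $x_0 < x_1$ and show that $r_{x_0}$ and $r_{x_1}$ differ on a set of positive Lebesgue measure. Elements of $\mathcal{E}$ are identified up to a.e.\ equality, so this is what is required. By \eqref{eq : def r x0}, $r_{x_0}$ is determined a.e.\ by its level set $F^+_{r_{x_0}} = [g_c(x_0), d_c(x_0)]$. It therefore suffices to show that the intervals $[g_c(x_0), d_c(x_0)]$ and $[g_c(x_1), d_c(x_1)]$ have symmetric difference of positive Lebesgue measure. Since $\nu>0$ everywhere, $\Xi$ is a strictly increasing continuous bijection from $\mathbb{R}$ onto $(0,1)$. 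We can therefore measure intervals through their $\nu$-mass: two intervals differ on a set of positive Lebesgue measure iff they differ on a set of positive $\nu$-mass.

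The computation is done after the change of variable $u=\Xi(x)$. We set $u_0=\Xi(x_0)<u_1=\Xi(x_1)$. By \eqref{eq : def lc}--\eqref{eq : def rc}, the image under $\Xi$ of $[g_c(x_0),d_c(x_0)]$ is the interval
\[
J(u_0) = \Bigl[\max\bigl(0,u_0-\tfrac c2\bigr)\cdot\mathbb{1}_{\{u_0>c\}},\ \min\bigl(1,u_0+\tfrac c2\bigr)\ \text{or}\ 1 \text{ if } u_0\ge 1-c\Bigr].
\]
More precisely, $J(u_0)$ has the following three forms:
\begin{itemize}
\item $[u_0-c/2,\,u_0+c/2]$ when $c<u_0<1-c$;
\item $(0,\,u_0+c/2]$ when $u_0\le c$;
\item $[u_0-c/2,\,1)$ when $u_0\ge 1-c$.
\end{itemize}
It is here that $c\le 1/2$ is used: it gives $c\le 1-c$, so the regimes $u_0\le c$ and $u_0\ge 1-c$ are disjoint except possibly at the single point $c=1/2=u_0$. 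Hence we never have both $g_c=-\infty$ and $d_c=\infty$ simultaneously, which would make $r_{x_0}\equiv e^\alpha$ and destroy injectivity.

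The proof then proceeds by case analysis on the positions of $u_0<u_1$. The key observation is that at least one endpoint of $J(u)$ is strictly increasing in $u$ across all regimes, with nothing degenerate in between:
\begin{itemize}
\item on $(0,1-c)$, the right endpoint $u+c/2$ is strictly increasing;
\item on $(c,1)$, the left endpoint $u-c/2$ is strictly increasing;
\item since $c\le 1-c$, these two ranges cover $(0,1)$.
\end{itemize}
If $u_0,u_1$ both lie in $(0,1-c)$, the right endpoints differ, $u_0+c/2<u_1+c/2$. The intervals then differ on $(u_0+c/2,\min(u_1+c/2,1))$, which is nonempty because $u_0+c/2<1$. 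If both lie in $[c,1)$, the left endpoints differ in the same way. In the remaining case $u_0<c$ and $u_1\ge 1-c$, with $c\le 1/2$ giving $u_0<u_1$. Then $J(u_0)$ has left endpoint $0$, while $J(u_1)$ has left endpoint $u_1-c/2\ge 1-3c/2\ge c/4>0$. So $(0,u_1-c/2)$ lies in the symmetric difference. In every case the symmetric difference contains a nondegenerate interval in $u$. Pulling back by $\Xi^{-1}$ gives a nondegenerate interval in $x$, so $r_{x_0}\neq r_{x_1}$ a.e.

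The step needing the most care is the boundary bookkeeping. This includes the open versus closed endpoints when $g_c=-\infty$ or $d_c=\infty$, and the equality case $c=1/2$, $u=1/2$. These do not affect Lebesgue-a.e.\ equality, since endpoints have measure zero. Surjectivity onto $\widehat{\mathcal{E}}$ holds by definition of $\widehat{\mathcal{E}}$ as the image of $I$.
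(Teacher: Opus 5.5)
Your proof takes the same route as the paper. The finite boundary is strictly increasing ($d_c$ where $\Xi<1-c$, $g_c$ where $\Xi>c$), and $c\le 1/2$ makes these two regions cover $\mathbb{R}$. You go further than the paper by saying explicitly that $r_{x_0}$ determines $[g_c(x_0),d_c(x_0)]$ only up to null sets, which is why nondegeneracy of the intervals matters.

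\textbf{The case $c=1/2$, $u=1/2$.} One step is wrong as written. Here both $u\le c$ and $u\ge 1-c$ hold, so $g_c=-\infty$ and $d_c=+\infty$ simultaneously. Then $J(1/2)=(0,1)$ and $r_{x_0}\equiv e^{\alpha}$.
\begin{itemize}
\item Your sentence ``we never have both $g_c=-\infty$ and $d_c=\infty$ simultaneously'' is false at this point.
\item Your later dismissal of this case as endpoint bookkeeping of measure zero is also incorrect. The interval $(0,1)$ differs from both of your candidate forms, $(0,3/4]$ and $[1/4,1)$, on sets of positive measure.
\item As a result your third case breaks when $u_1=1/2=c$. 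The left endpoint of $J(u_1)$ is then $0$, not $u_1-c/2$.
\end{itemize}
The conclusion still holds, because for $c=1/2$ every other $J(u)$ misses a nondegenerate interval: $(u+1/4,1)$ if $u<1/2$, and $(0,u-1/4)$ if $u>1/2$. This has to be stated, though. The paper's sentence ``at least one boundary is always finite'' contains the same slip.

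\textbf{Witness sets.} A smaller imprecision: the sets you exhibit in the symmetric difference, $(u_0+c/2,\min(u_1+c/2,1))$ and $(0,u_1-c/2)$, are too large when the two intervals are far apart, since both indicators vanish in the gap. For nondegenerate intervals $[a_0,b_0]$ and $[a_1,b_1]$ with $b_0<b_1$, the correct witness is $(\max(b_0,a_1),b_1)$. If instead $a_0<a_1$, use $(a_0,\min(a_1,b_0))$.

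\textbf{Avoiding the case split.} You can drop the case analysis by arguing as the paper does, by contradiction. Suppose $I(x_0)=I(x_1)$. The two nondegenerate intervals then coincide up to a null set, so they have the same endpoints in $[-\infty,+\infty]$.
\begin{itemize}
\item If some endpoint of $x_0$ is finite, the corresponding endpoint of $x_1$ is finite and equal to it. Strict monotonicity of that boundary function on the region where it is finite gives $x_0=x_1$.
\item If both endpoints are infinite, then $\Xi(x_0)$ and $\Xi(x_1)$ both lie in $[1-c,c]$. This forces $c=1/2$ and $\Xi(x_0)=\Xi(x_1)=1/2$, so again $x_0=x_1$.
\end{itemize}
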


\begin{proof}
	For any $x_0 \in \mathbb{R}$, the function $I(x_0)$ is defined as $x\mapsto 1 + (e^\alpha-1) \indi{[g_c(x_0),d_c(x_0)]}(x)$.
    Injectivity is guaranteed if the interval boundaries $g_c(x_0)$ and $d_c(x_0)$ uniquely identify $x_0$.
	From definitions \eqref{eq : def lc}--\eqref{eq : def rc}, and given that the base measure $\nu>0$, the functions $x_0 \mapsto g_c(x_0)$ and $x_0 \mapsto d_c(x_0)$ are strictly increasing where finite.
    Crucially, for $c\le 1/2$, at least one boundary is always finite, ensuring that the pair $(g_c(x_0),d_c(x_0))$ uniquely recovers $x_0$.
\end{proof}

From this point forward, we fix $c \le 1/2$. By the Lusin-Souslin theorem, $\widehat{\mathcal{E}}$ is a Borel set and $I^{-1}$ is measurable, allowing us to pull back the randomized value into the data space $\mathcal{X}$. We define the probability measure on $\mathcal{X}$ as $\widehat{q}^{(\mu)}(x,dx_0) = I^{-1}_\sharp(q^{(\mu)}(x,dr))$. We can derive the explicit density of this mechanism. For any test function $f$, the integral transformation yields:
\begin{align*}
\int_\mathbb{R}f(x_0)	\widehat{q}^{(\mu)}(x,dx_0) & =  
\int_\mathbb{R}f(x_0)	I^{-1}_\sharp(q^{(\mu)}(x,dr)) (dx_0)=
\int_\mathbb{\widehat{\mathcal{E}}}f(I^{-1}(r))	\widehat{q}^{(\mu)}(x,dr)
\\
&=
\int_\mathbb{\widehat{\mathcal{E}}}f(I^{-1}(r))	e_{x}(r) \frac{\mu^0(dr)}{1+c(e^\alpha-1)}, \quad \text{from \eqref{eq: q kernel},}
\\
&=
\int_\mathbb{\mathcal{X}}f(I^{-1}(I(x_0)))	e_x(I(x_0)) \frac{\nu(x_0)dx_0}{1+c(e^\alpha-1)},
\quad \text{as $\mu^0 = I_\sharp(\nu)$,}
\\
&=\int_\mathbb{\mathcal{X}}f(x_0) \left( 1+ (e^{\alpha}-1) \indi{[g_c(x_0),d_c(x_0)]}(x) \right) \frac{\nu(x_0)dx_0}{1+c(e^\alpha-1)},
\end{align*}
where we used the expression of the function $I(x_0) \in \mathcal{E}$ and the fact that $e_x$ is the evaluation operator at $x$.
Recalling \eqref{eq: inverse function l r}, we find the following explicit expression for density of the randomization mechanism, when $c \leq 1/2$,
\begin{equation}\label{eq : density rando X to X}
	\frac{\widehat{q}^{(\mu)}(x,dx_0)}{dx_0} = 
		 \left( 1+ (e^{\alpha}-1) \indi{[d_c^{-1}(x),g_c^{-1}(x)]}(x_0) \right) \frac{\nu(x_0)}{1+c(e^\alpha-1)}.
\end{equation}
The structure of \eqref{eq : density rando X to X} suggests a straightforward implementation using rejection sampling with proposal density $\nu$. Relative to the proposal $\nu$, the mechanism outputs public data $x_0$ more frequently when it falls within the interval $[d_c^{-1}(x),g_c^{-1}(x)]$, which contains the private data $x$. As $c \to 0$, this high-probability interval shrinks to the singleton $\{x\}$, minimizing noise; conversely, as $c$ increases, the interval widens, increasing privacy.

If the private data follows a density $p_\theta(x)$ with cumulative distribution function $F_\theta$, the resulting density of the public output $\widehat{p}_\theta(x_0)$ is derived by integrating over the private data {\modar distribution}:
\begin{align*}
	\widehat{p}_\theta(x_0)&=\int_\mathbb{R}	\frac{\widehat{q}^{(\mu)}(x,dx_0)}{dx_0} p_\theta(x)dx
	\\
	&= \frac{\nu(x_0)}{1+c(e^\alpha-1)} \left[1+(e^\alpha-1)(F_\theta(d_c(x_0)) - F_\theta(g_c(x_0))) \right],
\end{align*}
having used \eqref{eq : density rando X to X} and \eqref{eq: inverse function l r}.

\begin{rem}
    We restrict our implementation to $c \le 1/2$ for a critical reason: information collapse, as discussed in the following items.
    \begin{itemize}
        \item Loss of injectivity: when $c > 1/2$, the mapping $I$ is no longer one-to-one. For a wide range of central data values (specifically $x_0 \in (\Xi(1-c),\Xi(c))$), the mechanism outputs a single constant $I(x_0) = \mathfrak{r} := 1+(e^\alpha-1) \indi{\mathbb{R}}$. Effectively, the mechanism becomes "blind," failing to distinguish between any data points in this region.
        \item Statistical cost: this structural collapse destroys utility. Because the "blind spot" carries zero Fisher information ($i(\mathfrak{r})=0$), the overall information of the mechanism drops significantly.
        \item Numerical confirmation: as shown in Figures \ref{fig:std_theo_N_O_1}--\ref{fig:std_theo_Cauchy}, the standard deviation increases immediately when $c$ exceeds $1/2$. Thus, $c=1/2$ represents the structural boundary for the optimality of this method.
    \end{itemize}
\end{rem}

To assess the impact of the tuning parameter $c$ and the density $\nu$, we simulate a Gaussian translation model $X\sim\mathcal{N}(\theta,1)$ with $\theta_0=0$ using $n=10^3$ observations and $2 \times 10^3$ Monte Carlo trials. We test two proposal measures $\nu$: a standard Gaussian and a Cauchy distribution. As illustrated in Figures \ref{Figure : Gaussian}--\ref{Figure : Cauchy}, the theoretical standard deviation increases significantly for $c \ge 1/2$; consequently, we restrict our analysis to the efficient regime $c \in (0, 1/2]$ where empirical results perfectly match theoretical predictions. Our simulations reveal a clear transition in optimality based on the privacy level: in the high-privacy regime ($\alpha \le 1$), the choice $c=1/2$ minimizes variance, as lower values of $c$ yield negligible Fisher information. Conversely, in the low-privacy regime ($\alpha$ large), efficiency requires asymmetry; for example, at $\alpha=4$ with a Gaussian $\nu$, the minimal standard deviation of $3.67 \times 10^{-2}$ is achieved at $c \simeq 0.2$, a result that closely approaches the non-private MLE benchmark of $3.16 \times 10^{-2}$.

\begin{figure}
	\begin{subfigure}{.5\textwidth}
		\centering
		\includegraphics[width=.8\linewidth]{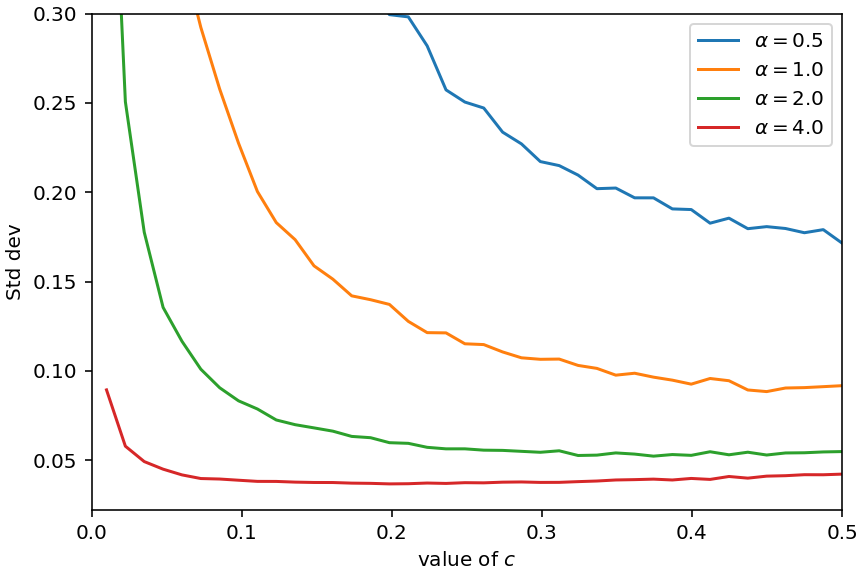}
		\caption{Standard deviation}
		\label{fig:std_N_O_1}
	\end{subfigure}%
	\begin{subfigure}{.5\textwidth}
		\centering
		\includegraphics[width=.8\linewidth]{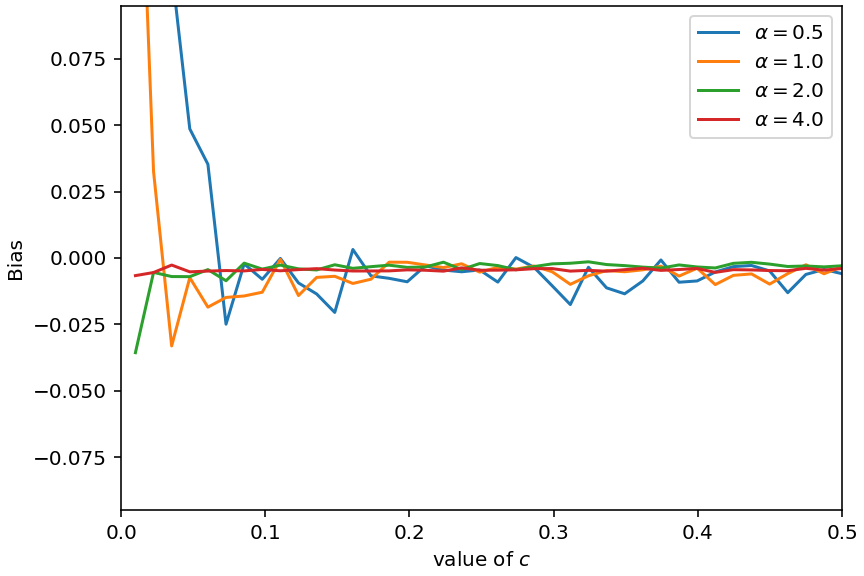}
		\caption{Bias}
		\label{fig:bias_N_O_1}
	\end{subfigure}
	\\
	\begin{subfigure}{.5\textwidth}
		\centering
		\includegraphics[width=.8\linewidth]{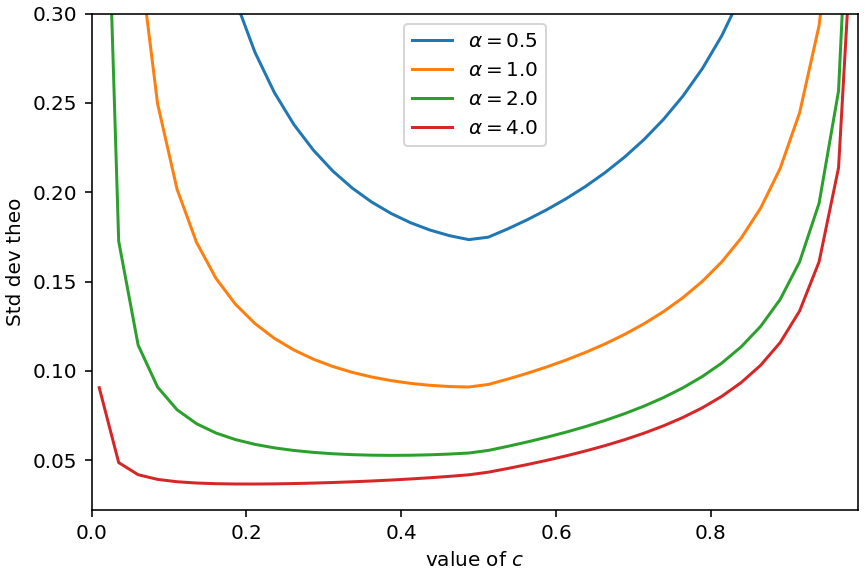}
		\caption{Theoretical standard deviation}
		\label{fig:std_theo_N_O_1}
	\end{subfigure}
	\caption{$\nu(x)=\frac{1}{\sqrt{2\pi}}e^{-x^2/2}$}
	\label{Figure : Gaussian}
\end{figure}

\begin{figure}
	\begin{subfigure}{.5\textwidth}
		\centering
		\includegraphics[width=.8\linewidth]{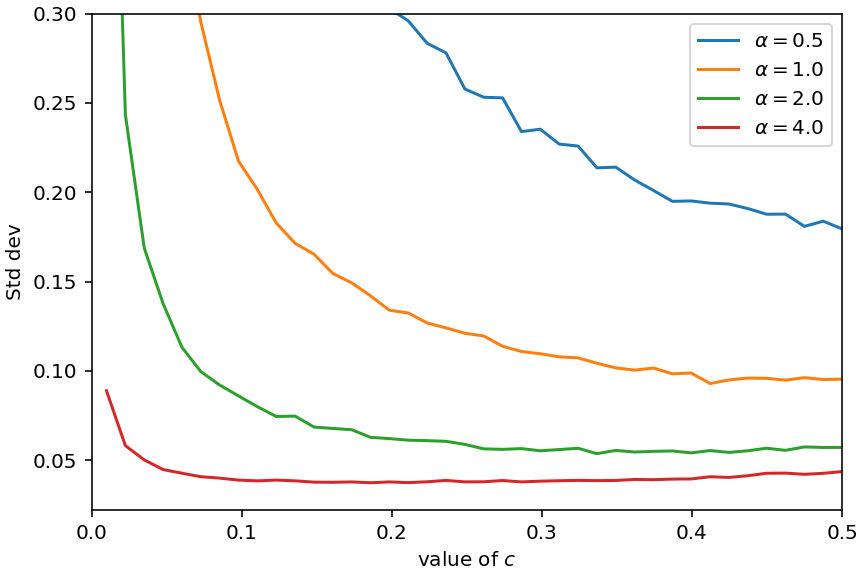}
		\caption{Standard deviation}
		\label{fig:std_Cauchy}
	\end{subfigure}%
	\begin{subfigure}{.5\textwidth}
		\centering
		\includegraphics[width=.8\linewidth]{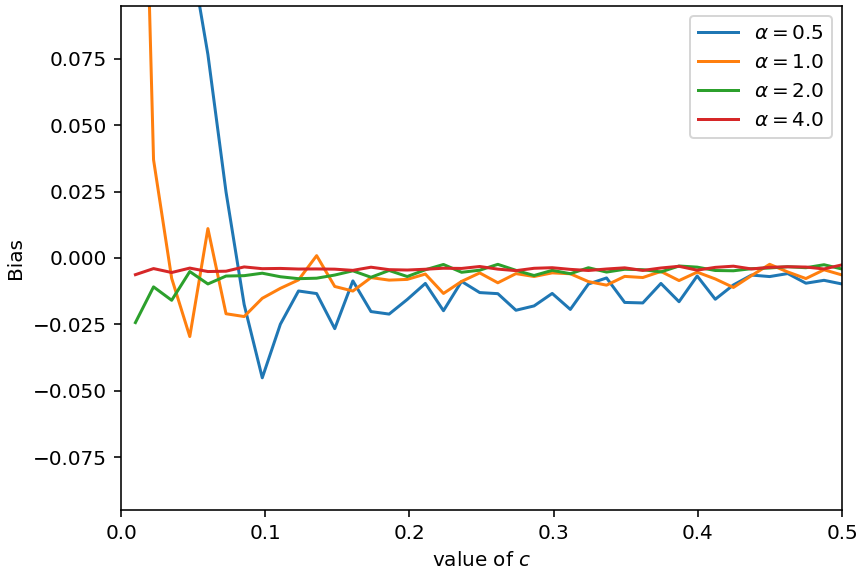}
		\caption{Bias}
		\label{fig:bias_Cauchy}
	\end{subfigure}
	\\
	\begin{subfigure}{.5\textwidth}
		\centering
		\includegraphics[width=.8\linewidth]{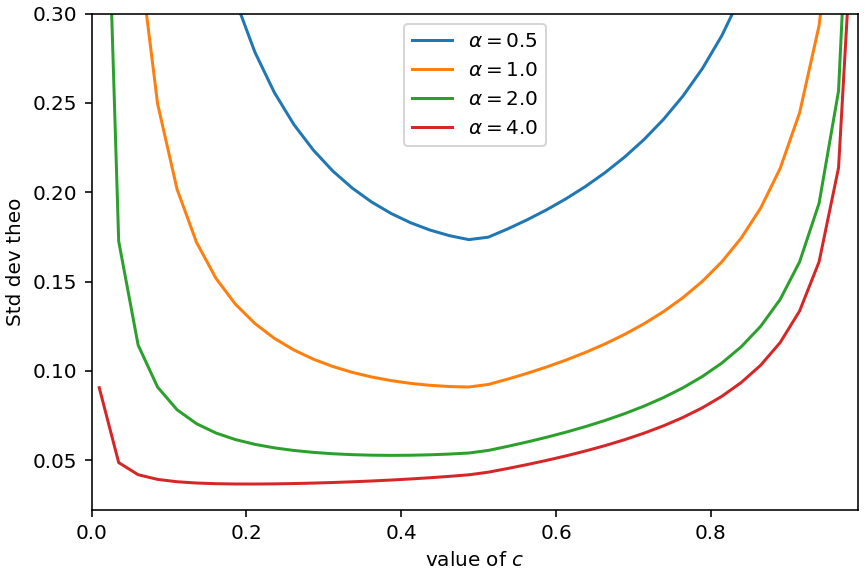}
		\caption{Theoretical standard deviation}
			\label{fig:std_theo_Cauchy}
	\end{subfigure}
	\caption{$\nu(x)=\frac{1}{\pi(1+x^2)}$}
	\label{Figure : Cauchy}
\end{figure}

\subsection{Approximation of binomial mechanism}{\label{ss:approx}}
To bridge the gap between the mechanism of Section \ref{ss:asymm} (optimal for low privacy) and the binomial mechanism (known to be asymptotically optimal for high privacy, $\alpha \to 0$), we introduce a modified construction that seamlessly interpolates between these two regimes. By adjusting the parameter $c$, this generalized mechanism can transition from high-precision local estimation to a coarse binary classification. We assume a unimodal location model where the score function's sign partitions the space into positive and negative half-lines, specifically $F^+_{\text{max}}=(0,\infty)$ and $F^{\prime +}_{\text{max}}=(-\infty,0)$. We assign positive densities $\nu^+$ and $\nu^-$ to these domains, with respective cumulative distribution functions $\Xi^+(x)=\int_0^x \nu^+(y)dy$ and $\Xi^-(x)=\int_{-\infty}^x \nu^-(y)dy$. To construct the mechanism, we define the boundary functions for $x>0$ as
\begin{align*}
	g^+_c(x)&=
	\begin{cases}
		0 & \text{ if $x \in (0, (\Xi^+)^{-1}(c) ]$}
			\\
		(\Xi^+)^{-1}(-\frac{c}{2}+\Xi^+(x)) & \text{ if $x \in ( (\Xi^+)^{-1}(c), \infty)$}
	\end{cases}
	\\
	d^+_c(x)&=
	\begin{cases}
		(\Xi^+)^{-1}(\frac{c}{2}+\Xi^+(x)) & \text{ if $x \in (0 , (\Xi^+)^{-1}(1-c))$}
		\\
		+\infty & \text{ if $x \in [ (\Xi^+)^{-1}(1-c), \infty)$}
	\end{cases}
\end{align*}
and symmetrically for $x<0$,
\begin{align*}
	g^-_c(x)&=
	\begin{cases}
		-\infty  & \text{ if $x \in (-\infty, (\Xi^-)^{-1}(c) ]$}
		\\
		(\Xi^-)^{-1}(-\frac{c}{2}+\Xi^-(x)) & \text{ if $x \in ( (\Xi^-)^{-1}(c),0)$}
	\end{cases}
	\\
	d^-_c(x)&=
	\begin{cases}
		(\Xi^-)^{-1}(\frac{c}{2}+\Xi^-(x)) & \text{ if $x \in (-\infty , (\Xi^-)^{-1}(1-c))$}
		\\
		0 & \text{ if $x \in [ (\Xi^-)^{-1}(1-c), 0)$.}
	\end{cases}
\end{align*}
By defining the interval boundaries $g_c(x_0)$ and $d_c(x_0)$ as the superposition of these positive and negative components, we construct the measure $\mu = I_\sharp(\frac{\nu_+ + \nu_-}{2+c(e^\alpha-1)})$, which we verify satisfies the normalization condition \eqref{eq : cond norm mu continue}. The crucial feature of this construction is its limit behavior: as $c \to 1$, the noise intervals $(g^\pm_c, d^\pm_c)$ expand to fill the entire half-lines $(0, \infty)$ and $(-\infty, 0)$. Consequently, the measure $\mu$ converges to the symmetric binomial measure $\frac{1}{1+e^\alpha}\left(\delta_{r_\text{max}} + \delta_{r'_\text{max}}\right)$, effectively recovering the binary output structure required for optimality at small $\alpha$. Conversely, for small $c$, the mechanism retains the local peak structure of Section \ref{ss:asymm}, preserving information for large $\alpha$. In our numerical experiments using folded normal distributions for $\nu^\pm$ (Figure \ref{Figure : Bino_Approx}), this flexibility yields tangible gains: for the high-privacy setting $\alpha=0.5$, choosing $c \simeq 1$ reduces the theoretical standard deviation to $0.162$, an improvement of approximately 5\% over the purely continuous Gaussian mechanism from Figure \ref{fig:std_N_O_1}.

\begin{figure}
	\begin{subfigure}{.5\textwidth}
		\centering
		\includegraphics[width=.8\linewidth]{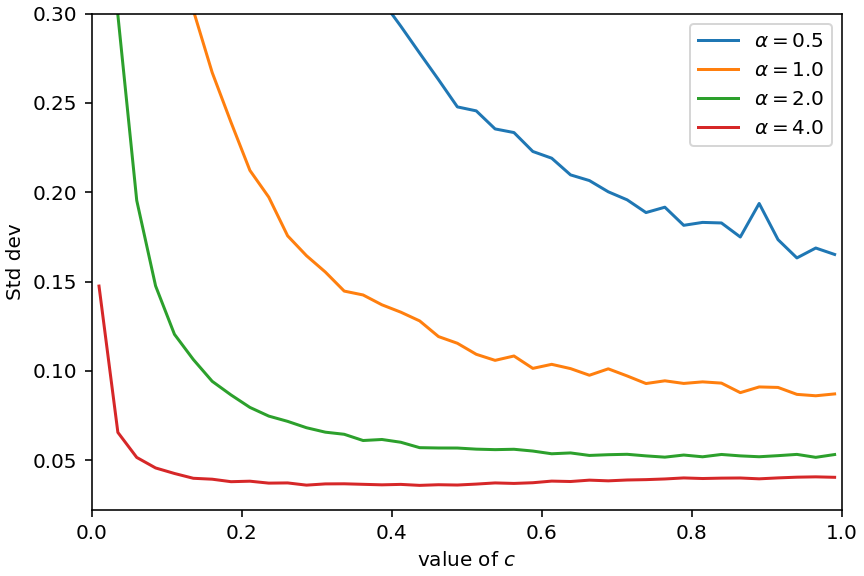}
		\caption{Standard deviation}
		\label{fig:std_Bino_Approx}
	\end{subfigure}%
	\begin{subfigure}{.5\textwidth}
		\centering
		\includegraphics[width=.8\linewidth]{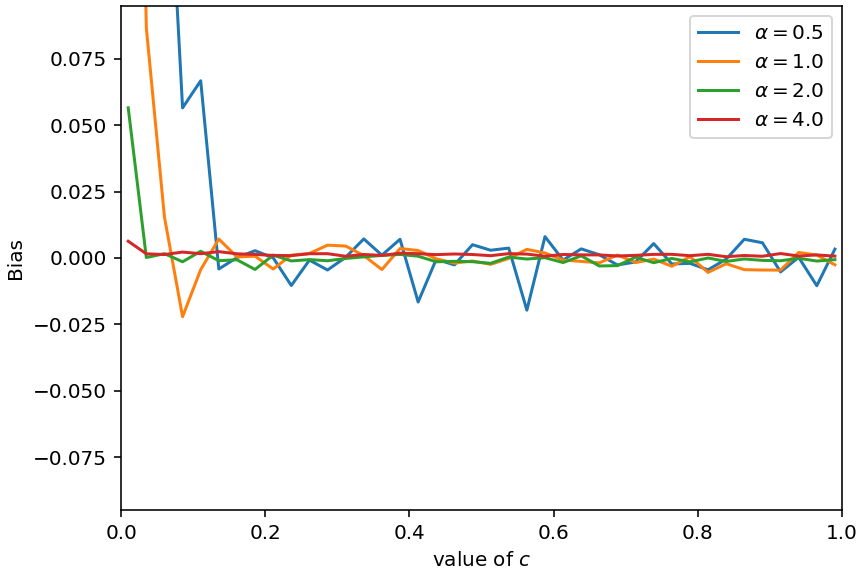}
		\caption{Bias}
		\label{fig:bias_Bino_Approx}
	\end{subfigure}
	\\
	\begin{subfigure}{.5\textwidth}
		\centering
		\includegraphics[width=.8\linewidth]{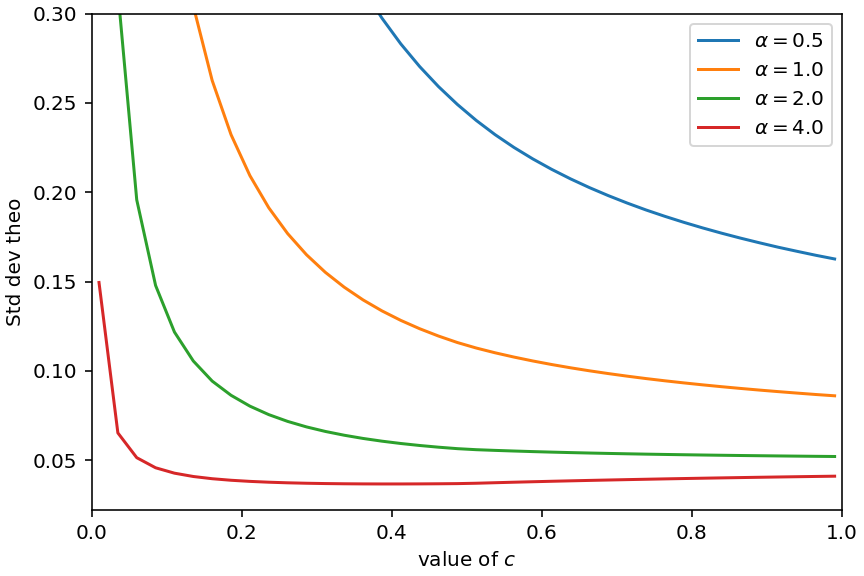}
				\caption{Theoretical standard deviation}
		\label{fig:std_theo_Bino_Approx}
	\end{subfigure}
	\caption{Approximation of binomial mechanism}
	\label{Figure : Bino_Approx}
\end{figure}

\section{Proofs}{\label{s: proof main}}

This section is dedicated to the proofs of our main results.

\subsection{Proof of Proposition \ref{prop: symmetrisée}}
\begin{proof}
\begin{enumerate}
    \item {First, we verify the normalization of the measure $\mu^{(s)}$. By definition, the integral of the function $e_x$ with respect to $\mu^{(s)}$ is given by:
\begin{equation*}
\int_\mathcal{E} e_x(r) \, \mu^{(s)}(dr) = \frac{1}{(e^\alpha + 1)\mu(\mathcal{E})} \int_\mathcal{E} e_x(r) \, \big(\mu + T(\mu)\big)(dr).
\end{equation*}
To evaluate the contribution of the second term, we utilize the properties of the operator $T$:
\begin{align}\label{eq: 28 new}
\int_\mathcal{E} e_x(r) \, T(\mu)(dr) &= \int_\mathcal{E} T(e_x(r)) \, \mu(dr) \nonumber \\
&= \int_\mathcal{E} \big(e^\alpha + 1 - e_x(r)\big) \, \mu(dr) \\
&= (e^\alpha + 1)\mu(\mathcal{E}) - \int_\mathcal{E} e_x(r) \, \mu(dr). \nonumber
\end{align}
Substituting \eqref{eq: 28 new} back into the expression for $\mu^{(s)}$, we observe that the terms involving the integral of $e_x(r)$ with respect to $\mu$ cancel precisely:
\begin{align*}
\int_\mathcal{E} e_x(r) \, \mu^{(s)}(dr) &= \frac{1}{(e^\alpha + 1)\mu(\mathcal{E})} \left[ \int_\mathcal{E} e_x(r) \, \mu(dr) + (e^\alpha + 1)\mu(\mathcal{E}) - \int_\mathcal{E} e_x(r) \, \mu(dr) \right] \\
&= \frac{(e^\alpha + 1)\mu(\mathcal{E})}{(e^\alpha + 1)\mu(\mathcal{E})} = 1.
\end{align*}
This confirms that the normalization constraint for $\mu^{(s)}$ is satisfied.\\
\\
Next, we establish the symmetry of $\mu^{(s)}$ by showing it is invariant under the transformation $T$. Exploiting the linearity of the operator and the fact that $T$ is an involution (i.e., $T \circ T = \text{id}$), we obtain:
\begin{align*}
T(\mu^{(s)}) &= \frac{1}{(e^\alpha + 1)\mu(\mathcal{E})} \big[ T(\mu) + T(T(\mu)) \big] \\
&= \frac{1}{(e^\alpha + 1)\mu(\mathcal{E})} \big[ T(\mu) + \mu \big] \\
&= \mu^{(s)},
\end{align*}
which completes the proof.}

    \item {We now turn to the second statement. Suppose $\mu$ is symmetric, such that $T(\mu) = \mu$. From this symmetry property, it follows that:
\begin{equation*}
\int_\mathcal{E} e_x(r) \, T(\mu)(dr) = \int_\mathcal{E} e_x(r) \, \mu(dr).
\end{equation*}
On the other hand, by \eqref{eq: 28 new}, we have:
\begin{equation*}
\int_\mathcal{E} e_x(r) \, T(\mu)(dr) = (e^\alpha + 1) \mu(\mathcal{E}) - \int_\mathcal{E} e_x(r) \, \mu(dr).
\end{equation*}
Equating these two expressions yields:
\begin{equation*}
2 \int_\mathcal{E} e_x(r) \, \mu(dr) = (e^\alpha + 1) \mu(\mathcal{E}).
\end{equation*}
By the normalization constraint $\int_\mathcal{E} e_x(r) \, \mu(dr) = 1$, the above identity directly implies that the total mass of the measure must be:
\begin{equation*}
\mu(\mathcal{E}) = \frac{2}{e^\alpha + 1}.
\end{equation*}
Finally, we substitute this value and the symmetry condition $T(\mu) = \mu$ into the definition of $\mu^{(s)}$ given in \eqref{eq: mu(s)}:
\begin{align*}
\mu^{(s)} &= \frac{1}{(e^\alpha + 1)\mu(\mathcal{E})} \big( \mu + T(\mu) \big) \\
&= \frac{1}{(e^\alpha + 1) \frac{2}{e^\alpha + 1}} \big( \mu + \mu \big) \\
&= \frac{1}{2} (2\mu) = \mu,
\end{align*}
which confirms that $\mu^{(s)} = \mu$ in the symmetric case.}
\end{enumerate}
\end{proof}

\subsection{Proof of Theorem \ref{thm: Fisher sym}}
\begin{proof}
Let us split this proof into four steps. \\
\\
 In Step 1, we show that if a measure $\mu$ is symmetric and normalized, then 
\begin{align*}
\mathcal{I}_{\theta_0}(q^{(\mu)} \circ \mathcal{P}) &= \int_{\mathcal{E}} i^{(s)} (r) \mu(dr) \\
& = (e^\alpha - 1)^2\frac{e^\alpha + 1}{2} \int_\mathcal{E} \frac{\left(\int_{F_r^+} s_{\theta_0}(x) p_{\theta_0}(x) \, dx\right)^2}{\left(1 + (e^\alpha - 1)\int_{F_r^+} p_{\theta_0}(x) \, dx\right) \left(e^\alpha - (e^\alpha - 1) \int_{F_r^+} p_{\theta_0}(x) \, dx\right)} \mu(dr).
\end{align*}
 In Step 2, we consider $\bar{\mu}$, the symmetric (by assumption) solution of the optimization problem, and we show that for any $H \in L^\infty(\mathcal{E})$ such that $\int_\mathcal{E} H(r) \bar{\mu}(dr) = 0$, it also holds that $\int_\mathcal{E} i^{(s)}(r) H(r) \bar{\mu}(dr) = 0$.\\
\\
In Step 3, we use this fact to conclude that $i^{(s)}(r)$ is $\bar{\mu}$-a.e. constant, or equivalently,
  \[
{\modar i^{(s)}(r)=i^*,~\bar{\mu}(dr) \text{ a.e.,\quad with \quad} {i^*}=\bar{\mu}(\mathcal{E})^{-1}\int_\mathcal{E} i^{(s)}(r) \bar{\mu}(dr).}
  \] 
  Putting Steps 1 and 3 together, we find that
  \begin{equation}\label{eq: fisher 1 steps1-3}
    \mathcal{J}_{\theta_0}^{\max, \alpha} = \mathcal{I}_{\theta_0}(q^{(\bar{\mu})} \circ \mathcal{P}) = \int_{\mathcal{E}} i^{(s)}(r) \bar{\mu}(dr) = i^* \bar{\mu}(\mathcal{E}).
  \end{equation}
 In Step 4, we consider $\max_{r \in B} i^{(s)}(r)$ and show that the maximum is attained on $\mathcal{E}$, so that its value coincides with the constant $i^*$. We can deduce, thanks to \eqref{eq: fisher 1 steps1-3} and Point 2 of Proposition~\ref{prop: symmetrisée}, that
  \[
  \mathcal{J}_{\theta_0}^{\max, \alpha} = \left( \max_{r \in B} i^{(s)}(r) \right) \frac{2}{e^\alpha + 1},
  \]
  which is the desired result.

\medskip

\noindent \textbf{Step 1.} Recall that, from \eqref{eq : Fisher ctn general_preli}, we have
\begin{equation}\label{eq: fish start sym}
  \mathcal{I}_{\theta_0}(q^{(\mu)} \circ \mathcal{P}) = \int_\mathcal{E} i(r) \mu(dr),
\end{equation}
with
\[
i(r) = \frac{\left( \int_\mathcal{X} s_{\theta_0}(x) r(x) p_{\theta_0}(x) \, dx \right)^2}{\int_\mathcal{X} r(x) p_{\theta_0}(x) \, dx}.
\]
Since $\mu$ is symmetric, we have (from Point 2 of Proposition~\ref{prop: symmetrisée}):
\[
\mu = \mu^{(s)} = \frac{1}{2}[\mu + T(\mu)].
\]
Plugging this into \eqref{eq: fish start sym} gives
\begin{equation}\label{eq: fish step 1 3}
  \mathcal{I}_{\theta_0}(q^{(\mu)} \circ \mathcal{P}) = \int_\mathcal{E} i(r) \, \frac{1}{2}[\mu + T(\mu)](dr) = \frac{1}{2} \int_\mathcal{E} [i(r) + i(T(r))] \mu(dr),
\end{equation}
using the definition of image measure. Recalling the definition
\[
i^{(s)}(r) := \frac{1}{2}[i(r) + i(T(r))],
\]
we have established the first equality of Step 1.

To prove the second part, we compute $i(T(r))$ in detail. Using the definitions of the symmetry operator $T$ (as in \eqref{eq: def T sym}) and $i(r)$ (as in \eqref{eq : def i r ctn}), we find:
\begin{align}{\label{eq: i(T(r))}}
i(T(r)) &= i(e^\alpha + 1 - r) = \frac{\left( \int_\mathcal{X} s_{\theta_0}(x) (e^\alpha + 1 - r(x)) p_{\theta_0}(x) \, dx \right)^2}{\int_\mathcal{X} (e^\alpha + 1 - r(x)) p_{\theta_0}(x) \, dx} \\
&= \frac{\left( \int_\mathcal{X} s_{\theta_0}(x) r(x) p_{\theta_0}(x) \, dx \right)^2}{e^\alpha + 1 - \int_\mathcal{X} r(x) p_{\theta_0}(x) \, dx}, \nonumber
\end{align}
where we used the facts that 
\begin{equation}{\label{eq: 40.5}}
 \int_\mathcal{X} s_{\theta_0}(x) p_{\theta_0}(x) \, dx = 0   
\end{equation}
and 
\begin{equation}{\label{eq: 40.75}}
 \int_\mathcal{X} p_{\theta_0}(x) \, dx = 1.   
\end{equation}
Comparing this with the expression for $i(r)$, we see that the numerator remains unchanged, while the denominator is transformed under $T$. Plugging this into \eqref{eq: fish step 1 3}, we get:
\begin{align}{\label{eq: 40.9}}
\mathcal{I}_{\theta_0}(q^{(\mu)} \circ \mathcal{P}) &= \frac{1}{2} \int_\mathcal{E} \left( \int_\mathcal{X} s_{\theta_0}(x) r(x) p_{\theta_0}(x) \, dx \right)^2 \left( \frac{1}{\int_\mathcal{X} r(x) p_{\theta_0}(x) \, dx} + \frac{1}{e^\alpha + 1 - \int_\mathcal{X} r(x) p_{\theta_0}(x) \, dx} \right) \mu(dr) \nonumber \\
&= \frac{1}{2}(e^\alpha + 1) \int_\mathcal{E} \frac{\left( \int_\mathcal{X} s_{\theta_0}(x) r(x) p_{\theta_0}(x) \, dx \right)^2}{\left( \int_\mathcal{X} r(x) p_{\theta_0}(x) \, dx \right) \left( e^\alpha + 1 - \int_\mathcal{X} r(x) p_{\theta_0}(x) \, dx \right)} \mu(dr).
\end{align}
Finally, replacing $r(x) = 1 + (e^\alpha - 1) \indi{F_r^+}$ and using \eqref{eq: 40.5} and \eqref{eq: 40.75} concludes Step 1. \\
\\
\textbf{Step 2.} Let $H \in L^\infty(\mathcal{E})$ be such that $\int_\mathcal{E} H(r)\, \bar{\mu}(dr) = 0$, and define $H^s := H + H \circ T$. Observe that, since $\bar{\mu}$ is symmetric by hypothesis, we can apply Point 2 of Proposition~\ref{prop: symmetrisée} to deduce $T(\bar{\mu}) = \bar{\mu}$. Hence,
\begin{align*}
\int_\mathcal{E} H^s(r)\, \bar{\mu}(dr) & = \int_\mathcal{E} H(r)\, \bar{\mu}(dr) + \int_\mathcal{E} H(T(r))\, \bar{\mu}(dr) \\
&= \int_\mathcal{E} H(r)\, \bar{\mu}(dr) + \int_\mathcal{E} H(r)\, T(\bar{\mu})(dr) \\
&= 2 \int_\mathcal{E} H(r)\, \bar{\mu}(dr) = 0.
\end{align*}
We now use $\epsilon H^s$ to perturb $\bar{\mu}$ for some {\modar $\epsilon\in \mathbb{R}$.} Define the perturbed measure by
\[
\mu^{\epsilon, H}(dr) := (1 + \epsilon H^s(r))\, \bar{\mu}(dr).
\]
We show that $\mu^{\epsilon, H}$ is a normalized probability measure {\modar for all $\abs*{\epsilon}< \frac{1}{2}\norm{H}_{{L}^\infty(\mathcal{E})}$. 
It is clear that $\mu^{\epsilon, H}$ is a non-negative measure.}
 Using that $\bar{\mu}$ is normalized, we compute
\begin{align}
\int_{\mathcal{E}} e_x(r)\, \mu^{\epsilon, H}(dr) &= \int_{\mathcal{E}} e_x(r)\, \bar{\mu}(dr) + \epsilon \int_{\mathcal{E}} e_x(r) H^s(r)\, \bar{\mu}(dr) \notag \\
&= 1 + \epsilon \int_{\mathcal{E}} e_x(r) H(r)\, \bar{\mu}(dr) + \epsilon \int_{\mathcal{E}} e_x(r) H(T(r))\, \bar{\mu}(dr). \label{eq: norm perturb 4}
\end{align}
For the last term, using that $T \circ T = \mathrm{id}$ and that $\bar{\mu}$ is symmetric, it equals
\[
\int_{\mathcal{E}} e_x(T(r)) H(r)\, T(\bar{\mu})(dr) = \int_{\mathcal{E}} e_x(e^\alpha + 1 - r) H(r)\, \bar{\mu}(dr) = (e^\alpha + 1) \int_{\mathcal{E}} H(r)\, \bar{\mu}(dr) - \int_{\mathcal{E}} e_x(r) H(r)\, \bar{\mu}(dr).
\]
Plugging this into~\eqref{eq: norm perturb 4}, and recalling that $\int_\mathcal{E} H(r)\, \bar{\mu}(dr) = 0$, we get
\[
\int_{\mathcal{E}} e_x(r)\, \mu^{\epsilon, H}(dr) = 1 + \epsilon \int_{\mathcal{E}} e_x(r) H(r)\, \bar{\mu}(dr) - \epsilon \int_{\mathcal{E}} e_x(r) H(r)\, \bar{\mu}(dr) = 1.
\]
Thus, $\mu^{\epsilon, H}$ is a probability measure satisfying \eqref{eq : cond norm mu continue}, as required. Since $\bar{\mu}$ solves the constrained optimization problem, we must have
\[
\int_\mathcal{E} i(r)\, \bar{\mu}(dr) \geq \int_\mathcal{E} i(r)\, \mu^{\epsilon, H}(dr) = \int_\mathcal{E} i(r)\, \bar{\mu}(dr) + \epsilon \int_\mathcal{E} i(r) H^s(r)\, \bar{\mu}(dr).
\]
Since this holds for all {\modar $\epsilon $ in a neighborhood of $0$,} it follows that
\[
\int_\mathcal{E} i(r) H^s(r)\, \bar{\mu}(dr) = 0.
\]
We now show that this implies
\[
\int_\mathcal{E} i^{(s)}(r) H(r)\, \bar{\mu}(dr) = 0.
\]
Indeed, using the definition of $H^s$ and symmetry of $\bar{\mu}$, we compute:
\begin{align*}
0 &= \int_\mathcal{E} i(r) H^s(r)\, \bar{\mu}(dr) = \int_\mathcal{E} i(r) \left[ H(r) + H(T(r)) \right]\, \bar{\mu}(dr) \\
&= \int_\mathcal{E} i(r) H(r)\, \bar{\mu}(dr) + \int_\mathcal{E} i(T(r)) H(r)\, T(\bar{\mu})(dr) = \int_\mathcal{E} \left[ i(r) + i(T(r)) \right] H(r)\, \bar{\mu}(dr) \\
&= 2 \int_\mathcal{E} i^{(s)}(r) H(r)\, \bar{\mu}(dr),
\end{align*}
which concludes the proof of Step 2.\\
\\
\textbf{Step 3.} We now use the result from Step 2 to show that $i^{(s)}$ is $\bar{\mu}$-a.e. constant. Let $G \in L^\infty(\mathcal{E})$ and let us define its centered version, on which we can apply Step 2:
\[
\bar{G} := G - \frac{1}{\bar{\mu}(\mathcal{E})} \int_\mathcal{E} G(r)\, \bar{\mu}(dr),
\]
so that 
\begin{equation}{\label{eq: 41.5}}
 \int_\mathcal{E} \bar{G}(r)\, \bar{\mu}(dr) = 0.  
\end{equation}
We can then use Step 2, which implies:
\begin{equation} \label{eq: conseq step 2 5}
\int_\mathcal{E} i^{(s)}(r) \bar{G}(r)\, \bar{\mu}(dr) = 0.
\end{equation}
Now define the centered version of $i^{(s)}$:
\[
\bar{i}^{(s)} := i^{(s)} - \frac{1}{\bar{\mu}(\mathcal{E})} \int_\mathcal{E} i^{(s)}(r)\, \bar{\mu}(dr).
\]
Using \eqref{eq: 41.5} and \eqref{eq: conseq step 2 5}, we deduce
\[
\int_\mathcal{E} \bar{i}^{(s)}(r)\, \bar{G}(r)\, \bar{\mu}(dr) = 0,
\]
and by construction $\int_\mathcal{E} \bar{i}^{(s)}(r)\, \bar{\mu}(dr) = 0$. Then, expanding $\bar{G}$, we obtain
\[
\int_\mathcal{E} \bar{i}^{(s)}(r) G(r)\, \bar{\mu}(dr) = 0.
\]
Since this holds for all $G \in L^\infty(\mathcal{E})$, it follows that $\bar{i}^{(s)} = 0$ $\bar{\mu}$-a.e., i.e.,
\[
i^{(s)} = \frac{1}{\bar{\mu}(\mathcal{E})} \int_\mathcal{E} i^{(s)}(r)\, \bar{\mu}(dr)=: i^* \quad \text{$\bar{\mu}$-a.e.}
\]
This concludes the proof of Step 3. \\
\\
\textbf{Step 4.} As anticipated in \eqref{eq: fisher 1 steps1-3}, we have so far established that
\[
\mathcal{J}_{\theta_0}^{\max, \alpha} = \mathcal{I}_{\theta_0}(q^{(\bar{\mu})} \circ \mathcal{P}) = \int_\mathcal{E} i^{(s)}(r) \, \bar{\mu}(dr) = i^{*} \bar{\mu} (\mathcal{E}).
\]
In this final step, we aim to prove that
\[
i^{*} = \max_{r \in B} i^{(s)}(r).
\]
Observe that the set \( B \), defined as in \eqref{eq: set B}, is clearly convex. Moreover, it is compact with respect to the weak-\(\star\) topology, thanks to Lemma 3 in \cite{amorino2025factorization} (see Section 6.1 of the same manuscript for an introduction to the weak-\(\star\) topology, particularly in the context of its application to local differential privacy). Consequently, since the mapping \( r \mapsto i^{(s)}(r) \) is continuous, the supremum
\[
\sup_{r \in B} i^{(s)}(r)
\]
is attained at some point \( r_{\max} \in B \), i.e.,
\[
\sup_{r \in B} i^{(s)}(r) = i^{(s)}(r_{\max}).
\]

We now claim that, due to the convexity of \( r \mapsto i^{(s)}(r) \), this maximum is attained on the set of extreme points \( \mathcal{E} \); that is, \( r_{\max} \in \mathcal{E} \). From this, it would follow that
\[
i^{(s)}(r_{\max}) = i^{*},
\]
i.e., the (constant on $\mathcal{E}$) value introduced in the previous step, thereby completing the proof of the theorem. 

To justify the claim, observe that since \( r_{\max} \in B \), Choquet’s theorem (see Theorem 5 in \cite{amorino2025factorization}) ensures the existence of a non-negative Radon measure \( \nu \) supported on \( \mathcal{E} \) such that
\[
r_{\max} = \int_\mathcal{E} r \, \nu(dr).
\]
By convexity of \( i^{(s)} \), we then have
\[
i^{(s)}(r_{\max}) \le \int_\mathcal{E} i^{(s)}(r) \, \nu(dr),
\]
which implies
\[
\int_\mathcal{E} \left[i^{(s)}(r) - i^{(s)}(r_{\max})\right] \nu(dr) \ge 0.
\]
On the other hand, from the definition of \( r_{\max} \), we know that \( i^{(s)}(r) \le i^{(s)}(r_{\max}) \) for all \( r \in B \), and hence for all \( r \in \mathcal{E} \). Therefore, the integrand is non-positive, and the only way the integral can be non-negative is if the integrand vanishes \(\nu\)-almost everywhere:
\[
i^{(s)}(r) = i^{(s)}(r_{\max}) \quad \text{for all } r \in \operatorname{supp}(\nu) \subseteq \mathcal{E}.
\]
Thus, the maximum is achieved on \( \mathcal{E} \), as claimed. This concludes the proof of the fourth and final step, and therefore of the theorem.

\end{proof}

\subsection{Proof of Proposition \ref{p: decomp fisher sym}}
\begin{proof}
We start by decomposing the measure \( \mu \) into its symmetric and asymmetric parts:
\begin{equation}\label{eq: start decomp 1}
\mathcal{I}_{\theta_0}(q^{(\mu)} \circ \mathcal{P}) = \int_{\mathcal{E}} i(r) \mu(dr) = \int_{\mathcal{E}} i(r) \mu^{(s)}(dr) + \int_{\mathcal{E}} i(r) \mu^{(as)}(dr).
\end{equation}
Since \( \mu^{(s)} \) is symmetric, the first term equals \( \int_{\mathcal{E}} i^{(s)}(r) \mu(dr) \), as shown in the proof of Theorem~\ref{thm: Fisher sym} (cf.~Equation~\eqref{eq: fish step 1 3}). For the asymmetric part, using the definition of \( \mu^{(s)} \) in \eqref{eq: mu(s)}, we write
\[
\mu^{(as)} = \mu - \mu^{(s)} = \frac{1}{(1 + e^\alpha) \mu(\mathcal{E})} \left( ((1 + e^\alpha) \mu(\mathcal{E}) - 1)\mu - T(\mu) \right).
\]
Substituting into \eqref{eq: start decomp 1} and using the definition of pushforward measure, we get
\begin{equation}\label{eq: asym decomp 2}
\int_{\mathcal{E}} i(r) \mu^{(as)}(dr) = \frac{1}{(1 + e^\alpha) \mu(\mathcal{E})} \int_{\mathcal{E}} \left[i(r)((1 + e^\alpha) \mu(\mathcal{E}) - 1) - i(T(r)) \right] \mu(dr).
\end{equation}
Using the definitions of \( i(r) \) and \( i(T(r)) \) in \eqref{eq : def i r ctn} and \eqref{eq: i(T(r))}, we compute:
\begin{align*}
& i(r)((1 + e^\alpha) \mu(\mathcal{E}) - 1) - i(T(r)) \\
&= \left( \int_\mathcal{X} s_{\theta_0}(x) r(x) p_{\theta_0}(x) dx \right)^2 \left( \frac{(1 + e^\alpha) \mu(\mathcal{E}) - 1}{\int_\mathcal{X} r(x) p_{\theta_0}(x) dx} - \frac{1}{e^\alpha + 1 - \int_\mathcal{X} r(x) p_{\theta_0}(x) dx} \right) \\
&= \frac{\left( \int_\mathcal{X} s_{\theta_0}(x) r(x) p_{\theta_0}(x) dx \right)^2 (1 + e^\alpha)\mu(\mathcal{E})}{\left( \int_\mathcal{X} r(x) p_{\theta_0}(x) dx \right)(e^\alpha + 1 - \int_\mathcal{X} r(x) p_{\theta_0}(x) dx)} \left( e^\alpha + 1 - \frac{1}{\mu(\mathcal{E})} - \int_\mathcal{X} r(x) p_{\theta_0}(x) dx \right).
\end{align*}
Recall that, from \eqref{eq: 40.9},
\begin{align*}
i^{(s)}(r) = \frac{e^\alpha + 1}{2} \frac{\left( \int_\mathcal{X} s_{\theta_0}(x) r(x) p_{\theta_0}(x) \, dx \right)^2}{\left( \int_\mathcal{X} r(x) p_{\theta_0}(x) \, dx \right) \left( e^\alpha + 1 - \int_\mathcal{X} r(x) p_{\theta_0}(x) \, dx \right)}.
\end{align*}
Recognizing its expression in the equation above, this implies
\[
\int_{\mathcal{E}} i(r) \mu^{(as)}(dr) = 2 \mu(\mathcal{E}) \int_{\mathcal{E}} \left[ e^\alpha + 1 - \frac{1}{\mu(\mathcal{E})} - \int_{\mathcal{X}} r(x) p_{\theta_0}(x) dx \right] i^{(s)}(r) \mu(dr).
\]
Now, since \( r(x) = 1 + (e^\alpha - 1)\mathbf{1}_{F_r^+}(x) \), we find
\[
\int_\mathcal{X} r(x) p_{\theta_0}(x) dx = 1 + (e^\alpha - 1)\int_{F_r^+} p_{\theta_0}(x) dx,
\]
so that
\begin{align*}
e^\alpha + 1 - \frac{1}{\mu(\mathcal{E})} - \int_\mathcal{X} r(x) p_{\theta_0}(x) dx &= e^\alpha - \frac{1}{\mu(\mathcal{E})} - (e^\alpha - 1)\int_{F_r^+} p_{\theta_0}(x) dx \\
&= \left( \frac{e^\alpha + 1}{2} - \frac{1}{\mu(\mathcal{E})} \right) - (e^\alpha - 1)\left( \int_{F_r^+} p_{\theta_0}(x) dx - \frac{1}{2} \right).    
\end{align*}
It follows that
\begin{align*}
\mathcal{I}_{\theta_0}(q^{(\mu)} \circ \mathcal{P})& = \int_{\mathcal{E}} i^{(s)}(r) \mu(dr) + 2 \mu(\mathcal{E}) \left( \frac{e^\alpha + 1}{2} - \frac{1}{\mu(\mathcal{E})} \right) \int_{\mathcal{E}} i^{(s)}(r) \mu(dr) \\
& - 2 \mu(\mathcal{E})(e^\alpha - 1)\left( \int_{F_r^+} p_{\theta_0}(x) dx - \frac{1}{2} \right) \int_{\mathcal{E}} i^{(s)}(r) \mu(dr).
\end{align*}
Recalling that \( i^{(as)}(r) = i^{(s)}(r) \left( \int_{F_r^+} p_{\theta_0}(x) dx - \frac{1}{2} \right) \), this gives the desired decomposition.

Let us now prove that \( i^{(as)}(T(r)) = -i^{(as)}(r) \). Since \( i^{(s)}(T(r)) = i^{(s)}(r) \) and \( T \) exchanges \( F_r^+ \) with its complement, we have
\[
i^{(as)}(T(r)) = i^{(s)}(r) \left( \int_{(F_r^+)^c} p_{\theta_0}(x) dx - \frac{1}{2} \right) = i^{(s)}(r) \left(\frac{1}{2} - \int_{F_r^+} p_{\theta_0}(x) dx \right) = - i^{(as)}(r).
\]

To conclude, in the regime \( \alpha \to 0 \), Equation \eqref{eq: i(s)r} yields:
{\modar
	\begin{align*}
 i^{(s)}(r) &= \alpha^2 \left( \int_{F_r^+} s_{\theta_0}(x) p_{\theta_0}(x) dx \right)^2 + O(\alpha^3),
\\ 
\quad i^{(as)}(r) &=\alpha^2 \left( \int_{F_r^+} s_{\theta_0}(x) p_{\theta_0}(x) dx \right)^2 \left( \int_{F_r^+} p_{\theta_0}(x) dx - \frac{1}{2} \right)+O(\alpha^3).
\end{align*}}
Moreover, from the bound on \( \mu(\mathcal{E}) \) in \eqref{eq: bound mu E}, we find
\[{\modar
\frac{e^\alpha + 1}{2} - \frac{1}{\mu(\mathcal{E})} 
=\frac{e^\alpha + 1}{2} - \frac{1}{1+O(\alpha)}=O(\alpha), }
\]
which, combined with the decomposition above, yields the result.
\end{proof}

\subsection{Proof of Lemma \ref{l : second step staircase}}

\begin{proof}	
	We define $x_{\text{min},c}=\Xi^{-1}(c/2)$ and  
	$x_{\text{max},c}=\Xi^{-1}(1-c/2)$.
	Remark that $g_c$ is left continuous, with a discontinuity at $x^*_{g,c}$ as
	$\lim_{x {\downarrow} x^*_{g,c}} g_c(x)=\Xi^{-1}(c/2)=x_{\text{min},c}>-\infty$. 
	The function $d_c$ is right continuous and $\lim_{x \uparrow x^{*}_{d,c}}d_c(x)=x_{\text{max},c}.$
	Let us represent the variation of $g_c$ by this table:
	
	\begin{tikzpicture}
		\tkzTabInit[espcl=4,deltacl=0.7]{$x$ / 1, $g_c(x)$ / 1.7}{$-\infty$, $x^*_{g,c}=\Xi^{-1}(c)$, , $+\infty$}
		\tkzTabVar{-/{$-\infty$},-D-/{$-\infty$}
			/{$x_{\text{min},c}$}
			,R/ ,+/ $x_{\text{max},c}$}
	\tkzTabVal{2}{4}{0.5}{}{	$\Xi^{-1}(-\frac{c}{2}+\Xi(\cdot)) $}
	\end{tikzpicture}

\noindent	
From the definition of the inverse of the left continuous function $g_c$, we can deduce that
\begin{equation*}
	g^{-1}_c(x):=\begin{cases}
		\Xi^{-1}(c) & \text{ for $x\in (-\infty, x_{\text{min},c}]$,}
		\\
		\Xi^{-1}\left(\frac{c}{2}+\Xi(x)\right) &\text{ for $x\in (x_{\text{min},c},x_{\text{max},c})$,}
		\\
		+\infty & \text{ for $x\in [x_{\text{max},c},\infty)$.}
	\end{cases}
\end{equation*}	
In the same way, we can prove	
\begin{equation*}
	d_c^{-1}(x):=\begin{cases}
		-\infty &\text{ for $x \in (-\infty, x_{\text{min},c}]$,}
		\\		
		\Xi^{-1}\left(-\frac{c}{2}+\Xi(x)\right) &\text{ for $x\in (x_{\text{min},c},x_{\text{max},c})$,}
		\\
		\Xi^{-1}(1-c) &\text{ for $x\in [x_{\text{max},c},\infty )$.}
	\end{cases}
\end{equation*}
Now, we can compute the value of
\begin{align*}
	 \int_{d_c^{-1}(x)}^{g_c^{-1}(x)}\nu(x_0) \, dx_0&=
	 \Xi(g_c^{-1}(x)) - \Xi(d_c^{-1}(x))
	 \\&=\begin{cases*}
	 \Xi(\Xi^{-1}(c)) &\text{ for $x \le x_{\text{min},c}$,}
	 \\
	 \Xi( \Xi^{-1}\left(\frac{c}{2}+\Xi(x)\right) ) -
	 \Xi( \Xi^{-1}\left(-\frac{c}{2}+\Xi(x)\right) ) 
	 &\text{ for $x\in (x_{\text{min},c},x_{\text{max},c})$,}
	 \\	
	 1-\Xi(\Xi^{-1}(1-c)) &\text{ for $x\ge x_{\text{max},c}$,}	
	 \end{cases*}
	 \\
	 &=c.
\end{align*}		
\end{proof}

\subsection{Proof of Proposition \ref{prop: kernel}}
\begin{proof}
	The proof consists in explicitly computing the Fisher information associated with the proposed privacy mechanism.
	 According to Equation \eqref{eq : Fisher ctn general_preli}, and {\modar introducing the notation 
	 	$\dot{p}_{\theta_0}(x)= s_{\theta_0}(x) p_{\theta_0}(x)$,}
we have:
	\begin{equation*}
	\mathcal{I}_{\theta_0} (q^{({\modar \mu})} \circ \mathcal{P}) = \int_\mathcal{E} \frac{\left( \int_\mathcal{X} \dot{p}_{\theta_0}(x) r(x) \, dx \right)^2}{\tilde{p}_{\theta_0}(r)} \, {\modar \mu(dr)} =  \int_\mathcal{E} i(r)  \, {\modar \mu}(dr),
	\end{equation*}
	where we use the notation \eqref{eq : def i r ctn}. 
	By \eqref{eq : mu_0 mu} and the change of variable formula \eqref{eq: mu kernel}, it gives,
	\begin{equation} \label{eq : I staircase with nu_0}
		\mathcal{I}_{\theta_0} (q^{({\modar \mu})} \circ \mathcal{P}) =
		\int_\mathbb{R}  i(I(x_0)) \frac{{\modar\nu}(x_0)}{1+c(e^\alpha-1)} \, dx_0 .
	\end{equation}	
	From the expression {\modar $r_{x_0=}=1+ (e^\alpha - 1) \indi{[g_c(x_0),d_c(x_0)]}$,} we have 
	\begin{equation} \label{eq : i I x0 explicite}
		i(I(x_0))=i(r_{x_0})=\frac{\left(e^\alpha-1\right)^2  \left( \int_{g(x_0)}^{d(x_0)}\dot{p}_{\theta_0}(x)\,dx\right)^2}
		{1+\left(e^\alpha-1\right) \int_{g(x_0)}^{d(x_0)} p_{\theta_0}(x)\,dx}.
	\end{equation}
As $c \to 0$, we can assume $c<1/2$ and we split the right hand side of \eqref{eq : I staircase with nu_0}
into three parts:
\begin{equation*}
		\mathcal{I}_{\theta_0} (q^{(\mu^0)} \circ \mathcal{P}) =
	\int_{-\infty}^{\Xi^{-1}(c)}  \dots + 
	\int_{\Xi^{-1}(c)}^{\Xi^{-1}(1-c)} \dots +
 	\int_{\Xi^{-1}(1-c)}^\infty \dots
 	= \sum_{l=1}^3 \mathcal{I}^{(l)}.
\end{equation*}	
Now, we establish the following three convergences, as $c \to 0$, $ce^\alpha\to\infty$:
\begin{equation}  \label{eq : a montrer trois cv}
	\mathcal{I}^{(1)} \to 0, \quad 	\mathcal{I}^{(3)} \to 0, \quad 	\mathcal{I}^{(2)} \to \int_\mathbb{R} \frac{\dot{p}_{\theta_0}(x)^2}{p_{\theta_0}(x)}\, dx.
\end{equation} 
First, let us obtain some upper bound on $i(r_{x_0})$.  From \eqref{eq : i I x0 explicite} and using Cauchy-Schwarz inequality, we have
\begin{multline} \label{eq : bound on i r x0}
	i(r_{x_0}) \le \frac { \left(e^\alpha-1\right)  \left( \int_{{\modar g_c}(x_0)}^{{\modar d_c}(x_0)}\dot{p}_{\theta_0}(x)\,dx\right)^2}
	{ \int_{{\modar g_c}(x_0)}^{{\modar d_c}(x_0)} p_{\theta_0}(x)\,dx} 
	\le   \left(e^\alpha-1\right)  \frac{ \left( \int_{{\modar g_c}(x_0)}^{{\modar d_c}(x_0)} \dot{p}_{\theta_0}(x)^2 /  p_{\theta_0}(x) \,dx\right)
	 \left( \int_{{\modar g_c}(x_0)}^{{\modar d_c}(x_0)} p_{\theta_0}(x) \,dx\right) }
	{ \int_{{\modar g_c}(x_0)}^{{\modar d_c}(x_0)} p_{\theta_0}(x)\,dx} 	
\\	= \left(e^\alpha-1\right) \left( \int_{{\modar g_c}(x_0)}^{{\modar d_c}(x_0)} \dot{p}_{\theta_0}(x)^2 /  p_{\theta_0}(x) \,dx\right).
\end{multline}
Now, let us focus on the convergence of  $	I^{(1)}$. Using the previous upper bound, we have
\begin{equation*}
		\mathcal{I}^{(1)} \le \left(e^\alpha-1\right) \int_{-\infty}^{\Xi^{-1}(c)}  
		\left( \int_{{\modar g_c}(x_0)}^{{\modar d_c}(x_0)}  \frac{ \dot{p}_{\theta_0}(x)^2 }{ p_{\theta_0}(x) } \, dx
		\right)
		\frac{{\modar\nu}(x_0)}{1+c(e^\alpha-1)} \,	dx_0.
\end{equation*}
For $x_0 \in (-\infty, \Xi^{-1}(c))$, we have by \eqref{eq : def lc} that
${\modar g_c}(x_0)=-\infty$ and, inserting in the previous upper bound the expression given in \eqref{eq : def rc}
for ${\modar d_c}(x_0)$, 
\begin{equation*}
		\mathcal{I}^{(1)}  \le  \left(e^\alpha-1\right) \int_{-\infty}^{\Xi^{-1}(c)}  
		\left( \int_{-\infty}^{ 	\Xi^{-1}(\frac{c}{2}+\Xi(x_0)) }  \frac{ \dot{p}_{\theta_0}(x)^2 }{ p_{\theta_0}(x) } \,dx \right)
		\frac{{\modar \nu}(x_0)}{1+c(e^\alpha-1)} \,dx_0 .
\end{equation*}
By the change of variable $u_0=\Xi(x_0)$ and recalling $\Xi'(x_0)={\modar \nu}(x_0)$, we have
\begin{align*}
	\mathcal{I}^{(1)}& \le  \left(e^\alpha-1\right) \int_{0}^{c}  
	\left( \int_{-\infty}^{ 	\Xi^{-1}(\frac{c}{2}+u_0) }  \frac{ \dot{p}_{\theta_0}(x)^2 }{ p_{\theta_0}(x) } \,dx \right)
	\frac{du_0}{1+c(e^\alpha-1)} 
	\\
	& \le  \left(e^\alpha-1\right) \int_{0}^{c}  
	\left( \int_{-\infty}^{ 	\Xi^{-1}(\frac{3c}{2}) }  \frac{ \dot{p}_{\theta_0}(x)^2 }{ p_{\theta_0}(x) } \,dx \right)
	\frac{du_0}{1+c(e^\alpha-1)} 
	\\
	& \le  \frac{c  \left(e^\alpha-1\right) }{1+c(e^\alpha-1)}
	\int_{-\infty}^{ \Xi^{-1}(\frac{3c}{2}) }  
	\frac{ \dot{p}_{\theta_0}(x)^2 }{ p_{\theta_0}(x) }\,dx \le 	\int_{-\infty}^{ \Xi^{-1}(\frac{3c}{2}) }  
	\frac{ \dot{p}_{\theta_0}(x)^2 }{ p_{\theta_0}(x) }\,dx
\end{align*}
From the finiteness of the Fisher information of the model, and $\Xi^{-1}(\frac{3c}{2}) \xrightarrow{c\to0} -\infty$, we deduce
that  $	\mathcal{I}^{(1)} \xrightarrow{c\to0} 0$.

We prove in the same way that  $\mathcal{I}^{(3)} \xrightarrow{c\to0} 0$. 

Let us focus on the convergence of $\mathcal{I}^{(2)}$. Using the expressions for ${\modar g_c}(x_0)$ and ${\modar d_c}(x_0)$ given in
\eqref{eq : def lc}--\eqref{eq : def rc} when $x_0 \in (\Xi^{-1}(c),\Xi^{-1}(1-c))$ together with the change of variable $u_0=\Xi(x_0)$, we have
\begin{align*}
\mathcal{I}^{(2)}
&=		\int_{\Xi^{-1}(c)}^{\Xi^{-1}(1-c)} 
\frac{\left(e^\alpha-1\right)^2  \left( \int_{{\modar g_c}(x_0)}^{{\modar d_c}(x_0)}\dot{p}_{\theta_0}(x)\,dx\right)^2}
{1+\left(e^\alpha-1\right) \int_{{\modar g_c}(x_0)}^{{\modar d_c}(x_0)} p_{\theta_0}(x)\,dx}
	\frac{{\modar\nu}(x_0)\,dx_0}{1+c(e^\alpha-1)} 
\\
&=\frac{\left(e^\alpha-1\right)^2}{1+c(e^\alpha-1)}\int_c^{1-c}  
\frac{ \left(\int_{\Xi^{-1}(-c/2+u_0)}^{\Xi^{-1}(c/2+u_0)} \dot{p}_{\theta_0}(x)\,dx \right)^2 
}{1 + (e^\alpha-1) \int_{\Xi^{-1}(-c/2+u_0)}^{\Xi^{-1}(c/2+u_0)} 
p_{\theta_0}(x)\,dx   
}
\,du_0 .
\end{align*}
By making the change of variable $w=\Xi(x)$ in the inner integrals, we can write
\begin{equation*}
	\mathcal{I}^{(2)}
=\int_0^1 \hat{i}_{c,\alpha}(u_0) \,du_0
\end{equation*}
with 
\begin{equation} \label{eq : def hat i c alpha}
	\hat{i}_{c,\alpha}(u_0)= \indi{(c,1-c)}(u_0) \frac{\left(e^\alpha-1\right)^2}{1+c(e^\alpha-1)}
	\frac{ \left(\int_{-c/2+u_0}^{c/2+u_0} \frac{\dot{p}_{\theta_0}(\Xi^{-1}(w))}{{\modar \nu}(\Xi^{-1}(w))} \,dw \right)^2 
	}{1 + (e^\alpha-1) \int_{-c/2+u_0}^{c/2+u_0} \frac{p_{\theta_0}(\Xi^{-1}(w))}{{\modar \nu}(\Xi^{-1}(w))} \,dw  }   .
\end{equation} 
Now, the almost everywhere convergence of $\hat{i}_{c,\alpha}$ and the uniform integrability given in Lemma \ref{l: technical cv i} below 
yield the convergence of the integral 
\begin{equation*}
\int_0^1 \hat{i}_{c,\alpha}(u_0) du_0 \xrightarrow{c\to0,~ce^\alpha\to\infty} \int_0^1 \hat{i}_*(u_0) \,du_0
=\int_0^1
 \frac{\dot{p}_{\theta_0}(\Xi^{-1}(u_0))^2}{p_{\theta_0}(\Xi^{-1}(u_0)){\modar \nu}(\Xi^{-1}(u_0))} \,du_0
 =
 \int_\mathbb{R}\frac{\dot{p}_{\theta_0}(x_0)^2}{p_{\theta_0}(x_0)}\, dx_0,
\end{equation*}
where in the final step 
we have set $x_0=\Xi^{-1}(u_0)$.
\end{proof}
\begin{lemma}\label{l: technical cv i}
1) We have for almost all $u_0\in (0,1)$,
\begin{equation}\label{eq : cv hat i to hat i zero}
\hat{i}_{c,\alpha}(u_0) \xrightarrow{c\to0,~ce^\alpha\to\infty}\hat{i}_*(u_0):=
 \frac{\dot{p}_{\theta_0}(\Xi^{-1}(u_0))^2}{p_{\theta_0}(\Xi^{-1}(u_0)){\modar \nu}(\Xi^{-1}(u_0))}.
\end{equation}	
2) Moreover, the family of functions $(\hat{i}_{c,\alpha})_{c\in(0,1/2), \alpha>0}$ is uniformly integrable on $[0,1]$.
\end{lemma}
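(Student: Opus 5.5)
For Point 1 I will set $y=\Xi^{-1}(w)$ and work with $f(w):=\dot p_{\theta_0}(\Xi^{-1}(w))/\nu(\Xi^{-1}(w))$ and $h(w):=p_{\theta_0}(\Xi^{-1}(w))/\nu(\Xi^{-1}(w))$. Both are locally integrable on $(0,1)$: their integrals over $(a,b)$ are $\int_{\Xi^{-1}(a)}^{\Xi^{-1}(b)}\dot p_{\theta_0}$ and $\int_{\Xi^{-1}(a)}^{\Xi^{-1}(b)} p_{\theta_0}$. Under Assumption \ref{Ass : smoothness for staircase}, $f$ and $h$ are continuous at almost every $u_0$, because $\nu>0$ is continuous and $\Xi^{-1}$ is a homeomorphism. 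Lebesgue differentiation therefore gives, for a.e. $u_0$,
\[
\int_{u_0-c/2}^{u_0+c/2} f = c f(u_0)+o(c), \qquad \int_{u_0-c/2}^{u_0+c/2} h = c h(u_0)+o(c).
\]
Plugging these into \eqref{eq : def hat i c alpha} gives
\[
\hat i_{c,\alpha}(u_0)=\frac{(e^\alpha-1)^2c^2(f(u_0)+o(1))^2}{(1+c(e^\alpha-1))(1+(e^\alpha-1)c(h(u_0)+o(1)))}.
\]
Write $\lambda=c(e^\alpha-1)$, so $\lambda\to\infty$. The ratio becomes $\lambda^2 f^2/((1+\lambda)(1+\lambda h))$, which tends to $f(u_0)^2/h(u_0)$ wherever $h(u_0)>0$. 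This holds for a.e. $u_0$ since $p_{\theta_0}>0$ a.e. on the relevant set. On the set where $p_{\theta_0}=0$ we also have $\dot p_{\theta_0}=0$, because the score is finite, so both sides vanish there. Since $f^2/h=\dot p^2/(p\nu)$ evaluated at $\Xi^{-1}(u_0)$, this is exactly $\hat i_*$. The indicator $\indi{(c,1-c)}(u_0)$ tends to $1$, so Point 1 follows.

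For Point 2 I will dominate $\hat i_{c,\alpha}$ by a family that is uniformly integrable for a structural reason: a maximal-function-type average of a fixed $L^1$ function. The first step repeats the Cauchy--Schwarz step of \eqref{eq : bound on i r x0} in the $w$ variable:
\[
\Big(\int_J f\Big)^2\le \int_J \frac{f^2}{h}\int_J h, \qquad J=[u_0-c/2,u_0+c/2].
\]
Combining this with $1+\lambda\int_J h\ge \lambda\int_J h$ and $1+\lambda\ge\lambda$ yields
\[
\hat i_{c,\alpha}(u_0)\le \frac{\lambda^2}{(1+\lambda)\lambda}\cdot\frac1c\int_J g \le \frac1c\int_{u_0-c/2}^{u_0+c/2} g(w)\,dw, \qquad g:=\frac{f^2}{h}=\hat i_*,
\]
where $g\in L^1(0,1)$ with $\int g$ equal to the Fisher information. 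The point to handle carefully is the factor $(e^\alpha-1)^2/(1+c(e^\alpha-1))$ against $c$: it is absorbed exactly as above, with no residual dependence on $\alpha$.

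It remains to show that $A_c g(u_0):=c^{-1}\int_{u_0-c/2}^{u_0+c/2}g$ (for $u_0\in(c,1-c)$, extending $g$ by $0$ outside) is uniformly integrable over $c\in(0,1/2)$. The Hardy--Littlewood maximal function is not integrable in general, so I will avoid it. Instead I use that $A_c g=g*\phi_c$ with $\phi_c=c^{-1}\indi{[-c/2,c/2]}$ an approximate identity. Then $A_cg\to g$ in $L^1$ as $c\to0$ (standard mollification), and for $c$ bounded away from $0$ we have $A_cg\le g*\phi_c$ with $\|A_c g\|_\infty\le c^{-1}\|g\|_1$. An $L^1$-convergent family together with its limit is uniformly integrable, and this covers all $c$. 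A family dominated by a uniformly integrable family is uniformly integrable, which gives Point 2.

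I expect the main obstacle to be this uniform integrability. In particular, the domination must be independent of $\alpha$, and must avoid the non-integrable maximal function.
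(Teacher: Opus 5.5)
Your argument is correct. Part 1 is the paper's argument, and so is the domination $\hat i_{c,\alpha}(u_0)\le c^{-1}\int_{u_0-c/2}^{u_0+c/2}k(w)\,dw=:A_ck(u_0)$ with $k=\hat i_*$. The genuine difference is in how you get uniform integrability of the averages $A_ck$.

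The paper bounds $A_ck$ by the maximal function $M_*=\sup_c A_ck$ and splits $k$ at a level $s'$. Fubini and the weak-type $(1,1)$ Hardy--Littlewood inequality then give the explicit tail bound $\int\hat i_{c,\alpha}\mathbf 1_{\{\hat i_{c,\alpha}\ge s\}}\le\int k\,\mathbf 1_{\{k\ge s'\}}+3s'\|k\|_{L^1}/s$, uniform in $(c,\alpha)$.

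You instead use two facts. For small $c$, $A_ck=k*\phi_c\to k$ in $L^1$ by continuity of translations. For $c\ge c_0$, with $c_0$ chosen depending on the target $\epsilon$, you have the trivial bound $\|A_ck\|_\infty\le c_0^{-1}\|k\|_{L^1}$. A tidy way to phrase this: $c\mapsto A_ck$ is continuous from $[0,1/2]$ into $L^1$ with $A_0k=k$, so the family is compact in $L^1$ and hence uniformly integrable.

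Your route is more elementary, since it needs no maximal inequality. It also shows that the paper's remark about dominated convergence being unusable is too pessimistic. Indeed $0\le\hat i_{c,\alpha}\le A_ck$, $A_ck\to k$ in $L^1$ and a.e., and $\hat i_{c,\alpha}\to k$ a.e. So the generalized (Pratt) form of dominated convergence already gives $\int\hat i_{c,\alpha}\to\int k$ along any admissible sequence. The paper's route, in exchange, gives a tail estimate that depends only on the distribution of $k$, through $\int k\,\mathbf 1_{\{k\ge s'\}}$ and $\|k\|_{L^1}$. Your threshold $c_0$ depends instead on the $L^1$ modulus of continuity of translations of $k$.

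Two small repairs are needed.

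First, on $\{p_{\theta_0}=0\}$ your first-order expansion only gives $\lambda^2o(1)^2/((1+\lambda)(1+\lambda\,o(1)))$, which by itself does not show the limit is $0$. Use instead $\hat i_{c,\alpha}\le A_ck\to k=0$ at Lebesgue points of $k$, with the convention $k:=0$ there. The paper silently excludes this case.

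Second, in your Cauchy--Schwarz display the denominator is $1+(e^\alpha-1)\int_Jh\ge(e^\alpha-1)\int_Jh$, not $1+\lambda\int_Jh$. The resulting bound $\frac{\lambda}{1+\lambda}\,c^{-1}\int_J g$ is unaffected.
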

\begin{proof}
1) We recall that the functions $x\mapsto {p}_{\theta_0}(x)/\nu_0(x)$ and   
	$x\mapsto \dot{p}_{\theta_0}(x)/\nu_0(x)$  are continuous at almost every point $x\in\mathbb{R}$  and that $u\mapsto\Xi^{-1}(u)$ is a $\mathcal{C}^1$ diffeomorphism. It entails that the functions $u \mapsto  {p}_{\theta_0}(\Xi^{-1}(u))/\nu_0(\Xi^{-1}(u))$ and 
	$u \mapsto  \dot{p}_{\theta_0}(\Xi^{-1}(u))/\nu_0(\Xi^{-1}(u))$ are continuous at almost every $u_0\in(0,1)$. For $u_0\in(0,1)$ where these functions are continuous, we have that as $c\to 0$, $ce^\alpha \to \infty$,
\begin{equation*}
\hat{i}_{c,\alpha}(u_0) =
\frac{e^{2\alpha}  \left(c	\dot{p}_{\theta_0}(\Xi^{-1}(u_0))/\nu_0(\Xi^{-1}(u_0))\right)^2 + o(e^{2\alpha}c^2)}
	{ ce^{2\alpha} c {p}_{\theta_0}(\Xi^{-1}(u_0))/\nu_0(\Xi^{-1}(u_0)) + o(e^{2\alpha}c^2) } 	
	\to \hat{i}_*(u_0).
\end{equation*}
	
2) We prove the uniform integrability of the family, which is a key step in the proof of Proposition \ref{prop: kernel} to justify that one can integrate in $u_0$ the convergence	
\eqref{eq : cv hat i to hat i zero}. We emphasize that without additional assumptions on the statistical model, it seems impossible to justify the convergence of this integral using the dominated convergence theorem. 

From \eqref{eq : def hat i c alpha} and with computations analogous to the one giving \eqref{eq : bound on i r x0}, we have
\begin{equation*}
	\hat{i}_{c,\alpha}(u_0)  \le \indi{(c,1-c)}(u_0) \frac{1}{c} \int_{-c/2+u_0}^{c/2+u_0} \frac{(\dot{p}_{\theta_0}(\Xi^{-1}(w)))^2}
	{{\modar p}_{\theta_0}(\Xi^{-1}(w))\nu_0(\Xi^{-1}(w))} dw.
\end{equation*}
Let us define the three following functions, {\modar for $u\in(0,1)$} :
\begin{gather*}
	k(u):= \frac{(\dot{p}_{\theta_0}(\Xi^{-1}(u)))^2}
	{{\modar p}_{\theta_0}(\Xi^{-1}(u))\nu_0(\Xi^{-1}(u))},
	\\ 
	M_c(u):=c^{-1}\int_{(u-c/2)\vee 0}^{(u+c/2) \wedge 1} k(w)dw,
	\quad
	M_*(u):=\sup_{0<c<1/2}	M_c(u)	.
\end{gather*}	
Then, we have $		\hat{i}_{c,\alpha}(u_0)   \le M_c(u_0) \le M_*(u_0)$. It is known from the Hardy--Littlewood maximal inequality that, for any $p>1$, $k\in L^p(0,1)$ implies $M_* \in L^p(0,1)$. However, the finiteness of the Fisher information means exactly that $k \in L^1(0,1)$ and the Hardy--Littlewood inequality in $L^p$-norm is not true for $p=1$. Consequently,  we can not deduce that $M_* \in L^1(0,1)$, which would be sufficient to dominate  the family 	$(\hat{i}_{c,\alpha})_{c,\alpha}$ by a {\modar $L^1(0,1)$} function. However, we can use the weak form of the Hardy--Littlewood inequality (e.g. see Theorem 3.17 in \cite{follandRealAnalysisModern1999}) : for $s>0$ and denoting the Lebesgue measure by $\lambda$,
\begin{equation} \label{eq : Hardy-Littlewood}
	\lambda\{ u \in (0,1) \mid M_*(u) \ge s\} \le 3 \frac{\norm{k}_{L^1(0,1)}}{s}.
\end{equation}

To prove the uniform integrability of the family, we need upper bound the following quantity, for $\lambda >0$,
\begin{align*}
	\int_0^1 \hat{i}_{c,\alpha}(u_0) \indi{\{\hat{i}_{c,\alpha}(u_0) \ge \lambda\}} du_0
&	\le 
	\int_0^1 M_c(u_0) \indi{\{M_*(u_0) \ge \lambda\}} du_0,	\quad \text{ where we used $	\hat{i}_{c,\alpha}(u_0)   \le M_c(u_0) \le M_*(u_0)$,}
\\
& \le \int_0^1 c^{-1} \left( \int_{(u_0-c/2)\vee 0}^{(u_0+c/2) \wedge 1} k(w)dw  \right)  \indi{\{M_*(u_0) \ge \lambda\}}  du_0, 
\quad \text{ by the definition of $M_c$,}
\\
&\le 
\begin{multlined}[t]
	\int_0^1 c^{-1} \left( \int_{(u_0-c/2)\vee 0}^{(u_0+c/2) \wedge 1} k(w) \indi{\{k(w) \ge \lambda'\}} dw  \right)  \indi{\{M_*(u_0) \ge \lambda\}}  du_0
	\\+
	\int_0^1 c^{-1} \left( \int_{(u_0-c/2)\vee 0}^{(u_0+c/2) \wedge 1} \lambda'dw  \right)  \indi{\{M_*(u_0) \ge \lambda\}}  du_0, 
	\quad \text{ for any $\lambda'>0$.}
\end{multlined}
\end{align*}
Using Fubini-Tonelli theorem on the first integral, and the simple control $\left( \int_{(u_0-c/2)\vee 0}^{(u_0+c/2) \wedge 1} \lambda'dw  \right) \le c \lambda'$ on the second one, we get
\begin{align*}
		\int_0^1 \hat{i}_{c,\alpha}(u_0) 
		\indi{\{\hat{i}_{c,\alpha}(u_0) \ge \lambda\}} du_0
	&
	\begin{multlined}[t]
		\le \int_0^1  k(w)\indi{\{k(w) \ge \lambda'\}}  \left( c^{-1}\int_0^1  
	\indi{\{\abs{w-u_0}\le c/2\}} du_0 \right)dw
	\\+	
	\lambda' \int_0^1  \indi{\{M_*(u_0) \ge \lambda\}}  du_0 
	\end{multlined}
	\\
& \le  \int_0^1  k(w)\indi{\{k(w) \ge \lambda'\}} dw +
\lambda' \int_0^1  \indi{\{M_*(u_0) \ge \lambda\}}  du_0 .
\end{align*}
Now, using \eqref{eq : Hardy-Littlewood}, we deduce
\begin{equation*}
			\int_0^1 \hat{i}_{c,\alpha}(u_0) \indi{\{\hat{i}_{c,\alpha}(u_0) \ge \lambda\}} du_0 \le  \int_0^1  k(w)\indi{\{k(w) \ge \lambda'\}} dw+3\frac{\lambda'}{\lambda} \norm{k}_{L^1(0,1)}.
\end{equation*}
Since $k \in L^1(0,1)$, the right hand side of the above equation can be made arbitrarily small by choosing  consecutively $\lambda'$ and $\lambda$ sufficiently large. Then, the fact that the right hand side of this equation is independent of $(c,\alpha)$ proves that the family $  (\hat{i}_{c,\alpha}(u_0))_{\lambda,\alpha} $ is uniformly integrable on $(0,1)$.
\end{proof}

\bibliographystyle{plain}   
\bibliography{bib}






\end{document}